\documentclass[onefignum,onetabnum]{siamonline250211}
\usepackage{amsfonts}
\usepackage{amssymb}
\usepackage{mathtools}
\usepackage{graphicx}
\usepackage{epstopdf}
\usepackage{color}
\usepackage{booktabs}
\usepackage{makecell}
\usepackage{algorithm}
\usepackage{algpseudocode}
\usepackage{enumitem}
\ifpdf
  \DeclareGraphicsExtensions{.eps,.pdf,.png,.jpg}
\else
  \DeclareGraphicsExtensions{.eps}
\fi

\providecommand{\citep}[1]{\cite{#1}} \providecommand{\citet}[1]{\cite{#1}}

\makeatletter
\DeclareRobustCommand{\cev}[1]{%
\mathpalette\do@cev{#1}%
}
\newcommand{\do@cev}[2]{%
  \fix@cev{#1}{+}%
\reflectbox{$\m@th#1\vec{\reflectbox{$\fix@cev{#1}{-}\m@th#1#2\fix@cev{#1}{+}$}}$}%
  \fix@cev{#1}{-}%
}
\newcommand{\fix@cev}[2]{%
\ifx#1\displaystyle \mkern#23mu \else \ifx#1\textstyle \mkern#23mu \else \ifx#1\scriptstyle \mkern#22mu \else \mkern#22mu
      \fi
    \fi
  \fi
}
\makeatother

\headers{From Variational Optimization to Flow-Based Transport}{S.~Liang, F.~Bao, H.~G.~Chipilski, P.J.~van Leeuwen, and G.~Zhang}

\title{From Variational Optimization to Flow-Based Transport: Posterior-Geometry Regularization for Ill-Conditioned Data Assimilation
\thanks{This manuscript is authored by UT-Battelle, LLC, under contract DE-AC05-00OR22725 with the US Department of Energy (DOE). The US government retains and the publisher, by accepting the article for publication, acknowledges that the US government retains a nonexclusive, paid-up, irrevocable, worldwide license to publish or reproduce the published form of this manuscript, or allow others to do so, for US government purposes. DOE will provide public access to these results of federally sponsored research in accordance with the DOE Public Access Plan.}
}

\author{Siming Liang\thanks{Computer Science and Mathematics Division, Oak Ridge National Laboratory, Oak Ridge, TN 37831.
}
\and
Feng Bao\thanks{Department of Mathematics, Florida State University, Tallahassee, FL 32306.
}
\and
Hristo G.~Chipilski\thanks{Department of Scientific Computing, Florida State University, Tallahassee, FL 32306. %(\email{hchipilski@fsu.edu})
}
\and
Peter Jan van Leeuwen\thanks{Department of Atmospheric Science, Colorado State University, Fort Collins, CO 80523. %(\email{Peter.vanLeeuwen@colostate.edu})
}
\and
\newline
Guannan Zhang\thanks{Corresponding author. Computer Science and Mathematics Division, Oak Ridge National Laboratory, Oak Ridge, TN 37831 (\email{zhangg@ornl.gov}).}
}

\usepackage{cite}
\definecolor{brown}{rgb}{0.55,0.33,0.14}

\IfFileExists{ulem.sty}{\usepackage[normalem]{ulem}}{}
\newlength{\dellen}
\ifdefined\sout
  
\else
  
\fi

\begin{document}

\maketitle

\begin{abstract}
Variational data assimilation computes a Bayesian update through optimization on a fixed posterior geometry, whose conditioning is determined by the prior covariance and the observation operator. In strongly anisotropic, weakly observed, or highly informative regimes, this geometry can become severely ill conditioned, leading to substantial sensitivity to solver design and preconditioning. This work investigates an alternative computational route based on flow-based transport, using the ensemble score filter as a training-free realization. Rather than repeatedly optimizing over the full posterior geometry, the flow-based update transports the predictive distribution through a sequence of intermediate distributions whose covariance and curvature are progressively regularized before reaching the target posterior. In the linear-Gaussian setting, we characterize this mechanism through the evolution of the diffused posterior covariance and the associated time-dependent curvature, revealing a continuation from a near-isotropic reference geometry to the ill-conditioned posterior geometry targeted by variational assimilation. Numerical stress tests show that this geometric regularization makes the flow-based method substantially less sensitive to severe ill-conditioning than the variational method, while a heterogeneous \(10^4\)-dimensional Lorenz–96 benchmark demonstrates improved robustness and lower delivered computational cost. These results identify posterior-geometry regularization as a key mechanism distinguishing transport-based from optimization-based data assimilation.
\end{abstract}

\begin{keywords}
{Variational data assimilation, Generative artificial intelligence, Diffusion models, Bayesian inference, Ill conditioning, Uncertainty quantification}
\end{keywords}

\begin{MSCcodes}
65C35, 65K10, 62M20, 62F15
\end{MSCcodes}

\section{Introduction}
Data assimilation (DA) combines uncertain model predictions with noisy and incomplete observations to estimate the state of a dynamical system \cite{Carrassi2018,bannister2017}. In real-world applications, the resulting Bayesian posterior can be highly anisotropic because prior uncertainty, observation coverage, and observational accuracy constrain different state directions very unevenly. Such anisotropy is not only a statistical property of the posterior, it also creates a numerical challenge, as widely separated curvature scales can make the Bayesian update step severely ill conditioned.

Variational data assimilation methods perform the update through optimization. In incremental three-dimensional variational data assimilation (3D-Var) and four-dimensional variational data assimilation (4D-Var), nonlinear objectives are locally approximated by quadratic inner-loop problems whose curvature is determined jointly by the background covariance and the linearized observation operator \cite{Barker2004,GrattonLawlessNichols2007,ParrishDerber1992,Rabier2000}. Strong prior anisotropy, weakly observed dimensions, or highly informative observations can therefore lead to ill-conditioned optimization problems. Covariance modeling, including factorizations, balance operators, localization, and hybrid models, determines the background covariance itself \cite{CourtierEtAl1998,DerberBouttier1999,LiGelbLee2024}, while control-variable transforms and Hessian-, Lanczos-, or Krylov-based preconditioners improve the resulting optimization problem \cite{Huang2009,Tremolet2007,FisherEtAl2009}; both are highly effective in practical systems. Ensemble-variational methods construct the background covariance from forecast ensembles and inherit the same incremental structure \cite{lorenc2003potential,hamill2000hybrid,buehner2005ensemble,WangEtAl2013}. Nevertheless, these approaches retain a common computational structure: the Bayesian update is obtained by solving an optimization problem on a fixed local posterior geometry.

 Transport-based filters, which realize the Bayesian update as a coupling of prior and posterior samples \cite{Spantini2022Coupling}, provide a fundamentally different computational framework. Rather than obtaining the analysis by directly minimizing a fixed objective function used in variational DA, score-based filters,  building on score-based generative models whose sampling accuracy in high dimensions has theoretical support \cite{ColeLu2024}, transport samples through pseudo-time between the target posterior distribution and a simple reference distribution \cite{Bao2024EnSF,ZhangBaoZhang2025IEnSF,si2025latent,xiao2026ld}. For example, the ensemble score filter (EnSF) provides a training-free realization of this idea by estimating the score from forecast ensembles and incorporating observations through likelihood-guided reverse transport \cite{BaoEtAl2025,BinderDasguptaOberai2026}; training-free flow-matching filters realize the same idea with deterministic transport \cite{Transue2026FlowMatchingDA}. Although EnSF and related methods have shown promising performance, their relationship to variational DA remains insufficiently understood. In particular, \emph{why can flow-based assimilation remain robust in regimes where variational optimization becomes increasingly sensitive to ill conditioning?}

This work shows that an important distinction lies in the posterior geometry encountered during the update step. Variational DA repeatedly operates on a fixed local posterior geometry, characterized by the posterior-curvature matrix, whereas flow-based DA targets the same Bayesian posterior through a sequence of intermediate distributions whose geometry evolves from an isotropic reference, i.e., the standard Gaussian distribution, toward the ill-conditioned target posterior distribution. We refer to this mechanism as \emph{posterior-geometry regularization}. Here, regularization does not modify the target posterior itself; rather, it regularizes the intermediate geometries encountered on the way to the target posterior. Therefore, flow-based DA methods replace repeated computation on a fixed and ill-conditioned posterior geometry with transport through a progressively evolving geometry.
The distinction between variational and flow-based DA methods can be characterized explicitly in the linear setting. We compare the fixed posterior curvature associated with variational methods with the pseudo-time-dependent curvature generated by flow-based posterior transport, and show how the latter continuously connects an isotropic reference geometry to the target posterior geometry. Our study shows that variational methods can recover the exact Gaussian posterior when sufficiently well solved, while flow-based methods can offer greater robustness to severe conditioning by changing the geometry encountered during the update.

The main contributions of this work are:
% \vspace{0.2cm}
\begin{itemize}[leftmargin=20pt]\itemsep0.1cm
    \item \emph{We establish a posterior-geometry connection between variational optimization and flow-based data assimilation.} In the linear setting, we characterize the fixed posterior geometry characterized by the Hessian of the incremental variational objective and the pseudo-time-dependent curvature induced by flow-based methods, showing how the latter continuously connects an isotropic reference geometry to the target posterior geometry.
\item \emph{We identify posterior-geometry regularization as a mechanism underlying the numerical robustness of flow-based DA methods.} We distinguish the posterior transport from the practical EnSF guidance and analyze how the intermediate curvature and reverse dynamics depend on prior anisotropy and observational information.

\item \emph{We demonstrate the consequences of these different computational geometries in controlled and nonlinear DA problems.} Gaussian stress tests isolate conditioning effects under different prior and observational regimes, while a heterogeneous \(10^4\)-dimensional Lorenz–96 benchmark shows that the flow-based route can provide improved robustness and lower delivered computational cost in a challenging nonlinear setting.
\end{itemize}
\vspace{0.2cm}

The remainder of the paper is organized as follows. Section~\ref{sec:var_da} formulates variational DA in a sequential Bayesian setting and reviews the conditioning of the incremental optimization problem and the role of classical preconditioners. Section~\ref{sec:flow_geometry} derives the exact linear-Gaussian curvature transport and shows how practical EnSF induces a related, computationally accessible geometry path. Section~\ref{sec:numerical_experiments} presents the Gaussian conditioning experiments, the heterogeneous Lorenz–96 benchmark, and computational-cost comparisons. Section~\ref{sec:conclusion} summarizes the implications of posterior-geometry regularization for optimization- and transport-based data assimilation.

\section{Variational Data Assimilation and Fixed Posterior Geometry}
\label{sec:var_da}
We begin by formulating the Bayesian update step that is shared by both variational and flow-based DA methods. The purpose of this section is not to review variational DA in full, but to identify the fixed posterior geometry that governs its incremental optimization process and to clarify how classical preconditioning acts on the geometry.

\subsection{Bayesian update and incremental variational formulation}
\label{sec:var_formulation}

Let $X_n\in\mathbb{R}^d$ denote the hidden model state and $Y_n\in\mathbb{R}^m$ the corresponding observation. We consider the state and observation models
\begin{equation}
    X_{n+1}=\mathcal{M}_n(X_n)+\Xi_n, \qquad
    Y_{n+1}=\mathcal{H}_{n+1}(X_{n+1})+E_{n+1},
    \label{eq:state_obs_model}
\end{equation}
where $\mathcal{M}_n$ is the forecast model, $\mathcal{H}_{n+1}$ is the observation operator, and $\Xi_n$ and $E_{n+1}$ denote model and observation errors. The sequential Bayesian prediction step propagates the current posterior distribution through the model transition density
\begin{equation}
p_{X_{n+1}| Y_{1:n}}(x_{n+1})=\int p_{X_{n+1}| X_n}(x_{n+1}|x_n)p_{X_n| Y_{1:n}}(x_n)\,dx_n,
\label{eq:bayesian_prediction}
\end{equation}
which is the predictive prior at the next assimilation time. After the new observation $y_{n+1}$ becomes available, Bayes' rule gives the posterior filtering density by combining this predictive prior with the observation likelihood, i.e.,
\begin{equation}
p_{X_{n+1}| Y_{1:n+1}}(x_{n+1}) \propto
p_{Y_{n+1}| X_{n+1}}(y_{n+1}| x_{n+1})p_{X_{n+1}| Y_{1:n}}(x_{n+1}).
\label{eq:bayes_update}
\end{equation}
This posterior is the common target posterior distribution considered throughout the paper.

Following standard practice in variational data assimilation, we assume that the predictive prior is a Gaussian distribution given by
\begin{equation}\label{eq:prior}
    X_{n+1}| Y_{1:n} \sim
    \mathcal{N}\!\left(\bar{x}_{n+1| n},\,B_{n+1| n}\right),
\end{equation}
with mean $\bar{x}_{n+1|n}$ and covariance $B_{n+1|n}$, and that the observation errors follow a Gaussian distribution:
\begin{equation}\label{eq:obs_R}
E_{n+1}\sim\mathcal{N}(0,R_{n+1}),
\end{equation}
where $R_{n+1}$ is the observation covariance matrix. The corresponding variational objective function, whose minimizer is the mode (the maximum a posteriori estimate) of the posterior in Eq.~\eqref{eq:bayes_update} \cite{VanLeeuwen1996}, can be defined by
\begin{equation}
\begin{aligned}
    J(x) &= \frac12\left\|x-\bar{x}_{n+1| n}\right\|_{B_{n+1| n}^{-1}}^2 +
    \frac12\left\|y_{n+1}-\mathcal{H}_{n+1}(x)\right\|_{R_{n+1}^{-1}}^2,
\end{aligned}
\label{eq:var_objective}
\end{equation}
where the norm is defined by $\|v\|_M^2:=v^\top Mv$.
Incremental variational DA solves a sequence of local quadratic approximations to Eq.~\eqref{eq:var_objective}. Specifically, we can write
\begin{equation}\label{eq:d}
    x_{n+1}=\bar{x}_{n+1| n}+\delta x, \qquad
    d_{n+1} = y_{n+1}-\mathcal{H}_{n+1}(\bar{x}_{n+1| n}),
\end{equation}
and linearize the observation operator around the reference state, i.e.,
\begin{equation}
    \mathcal{H}_{n+1}(\bar{x}_{n+1| n}+\delta x) \approx
    \mathcal{H}_{n+1}(\bar{x}_{n+1| n}) + H_{n+1}\delta x,
\end{equation}
where $H_{n+1}$ is the Jacobian of the observation operator. In nonlinear applications, the approximation is updated through outer-loop iterations, so the method should be viewed as a sequence of local quadratic models rather than a single globally linear inverse problem, following the standard Gauss--Newton scheme for nonlinear least-squares problems \cite{GrattonLawlessNichols2007}. The corresponding inner-loop problem becomes
\begin{equation}
    \delta x^\star =\arg\min_{\delta x}\left\{ J(\delta x)\right\}
    = \arg\min_{\delta x} \left\{ \frac12\|\delta x\|_{B_{n+1| n}^{-1}}^2 + \frac12\|d_{n+1}-H_{n+1}\delta x\|_{R_{n+1}^{-1}}^2 \right\}.
    \label{eq:incremental_var}
\end{equation}
Its gradient and Gauss--Newton Hessian are
\begin{equation}
\begin{aligned}
    \nabla_{\delta x}J &= B_{n+1| n}^{-1}\delta x - H_{n+1}^{\top}R_{n+1}^{-1} \left(d_{n+1}-H_{n+1}\delta x\right),\\
    \nabla_{\delta x}^2J &=B_{n+1| n}^{-1}+ H_{n+1}^{\top}R_{n+1}^{-1}H_{n+1}.
\end{aligned}
\label{eq:var_grad_hess}
\end{equation}
Following the Gauss--Newton formalism, the term involving second derivatives of $\mathcal{H}_{n+1}$ is neglected in Eq.~\eqref{eq:var_grad_hess}; for a linear observation operator the expression is exact \cite{GrattonLawlessNichols2007}, and the matrix $\nabla_{\delta x}^2J$ is simultaneously the Hessian of the variational objective, the precision matrix of the Gaussian posterior, and the posterior-curvature matrix. We use posterior geometry to refer more broadly to the anisotropic structure characterized locally by this curvature. For a nonlinear observation operator, it is the local Gauss--Newton curvature used in the current inner loop defined by Eq.~\eqref{eq:incremental_var}. In variational DA, the Hessian $\nabla_{\delta x}^2J$ in Eq.~\eqref{eq:var_grad_hess} is the central posterior-curvature descriptor that defines the geometry encountered by variational optimization.

\subsection{Ill-conditioned posterior geometry}
\label{sec:ill_conditioned_geometry}
For notational simplicity, we define the following shorthand for the matrix $\nabla_{\delta x}^2J$ in Eq.~\eqref{eq:var_grad_hess}:
\begin{equation}
\boxed{
A:=B_{n+1| n}^{-1} + H_{n+1}^{\top}R_{n+1}^{-1}H_{n+1}.}
    \label{eq:posterior_curvature_split}
\end{equation}
Throughout the remainder of the paper, we refer to \(A\) as the posterior-curvature matrix. Poor conditioning of the Bayesian update is a property of the \emph{full posterior geometry}, not of the background covariance alone. Strong prior anisotropy can produce widely separated curvature scales, but so can sparse or highly precise observations. To make the numerical consequence explicit, we consider the quadratic form obtained by expanding the inner-loop objective in Eq.~\eqref{eq:incremental_var}:
\begin{equation}
    J(\delta x) =
    \frac{1}{2}\delta x^\top A\delta x - g^\top\delta x + c,
\end{equation}
where $A$ is a symmetric positive definite matrix. Here $A$ is precisely the posterior-curvature matrix in Eq.~\eqref{eq:posterior_curvature_split}; $g=H_{n+1}^{\top}R_{n+1}^{-1}d_{n+1}$ is the innovation mapped into state space; and $c=\tfrac{1}{2}\,d_{n+1}^{\top}R_{n+1}^{-1}d_{n+1}$ is a constant independent of $\delta x$, which shifts the value of $J$ but not its minimizer $\delta x^{\star}=A^{-1}g$. The stability and convergence behavior analyzed below depend only on $A$. Fixed-step gradient descent satisfies $\delta x_{k+1}=\delta x_k-\eta(A\delta x_k-g),$ and is stable only if \cite{NocedalWright2006}

\begin{equation}
    0<\eta<\frac{2}{\lambda_{\max}(A)}.
    \label{eq:gd_stability}
\end{equation}
For the optimal fixed step size
\begin{equation}
    \eta^\star=\frac{2}{\lambda_{\max}(A)+\lambda_{\min}(A)},
\end{equation}
the asymptotic error contraction factor is
\begin{equation}
    \rho^\star = \frac{\kappa(A)-1}{\kappa(A)+1},
    \qquad
    \kappa(A)=\frac{\lambda_{\max}(A)}{\lambda_{\min}(A)}.
    \label{eq:gd_contraction}
\end{equation}
Therefore, as $\kappa(A)$ grows, a fixed first-order iteration becomes increasingly slow and increasingly sensitive to step-size selection. This calculation is not intended to suggest that all variational solvers behave like gradient descent; line searches and quasi-Newton methods \cite{GilbertLemarechal1989}, conjugate-gradient and Lanczos methods \cite{FisherEtAl2009}, and preconditioned Krylov solvers \cite{TshimangaEtAl2008,GrattonTshimanga2009,GurolEtAl2014} can substantially improve convergence. Rather, Eq.~\eqref{eq:gd_stability}--Eq.~\eqref{eq:gd_contraction} make explicit why spectral anisotropy of the posterior curvature is the central numerical quantity that preconditioned variational methods must address.

\subsection{Classical preconditioning of the fixed posterior geometry}
\label{sec:classical_preconditioning}

A standard first-level preconditioning strategy is the control-variable transform (CVT). Specifically, we define
\begin{equation}
    \delta x=Uv, \qquad
    B_{n+1| n}=UU^\top,
    \label{eq:cvt}
\end{equation}
where $U$ is a square root, approximate square root, or structured factor of the background covariance matrix. Substituting Eq.~\eqref{eq:cvt} into Eq.~\eqref{eq:incremental_var}, we convert the quadratic problem in Eq.~\eqref{eq:incremental_var} to the following problem:
\begin{equation}
    v^\star=\arg\min_v\left\{\|v\|_2^2+ \|d_{n+1}-H_{n+1}Uv\|_{R_{n+1}^{-1}}^2 \right\},
    \label{eq:cvt_objective}
\end{equation}
where the Hessian becomes
\begin{equation}
    A_{\mathrm{CVT}}=I+U^\top H_{n+1}^{\top}R_{n+1}^{-1}H_{n+1}U.
    \label{eq:cvt_hessian}
\end{equation}
Eq.~\eqref{eq:cvt_hessian} shows both the strength and the limitation of first-level preconditioning. The CVT removes the background contribution $B_{n+1| n}^{-1}$ from the Hessian and replaces it by the identity, thereby eliminating prior-induced anisotropy in the transformed coordinates. Since the observation-induced term is positive semidefinite, all eigenvalues of Eq.~\eqref{eq:cvt_hessian} are bounded below by one, and when the posterior anisotropy is dominated by the prior, the conditioning of the transformed problem improves dramatically. However, the observation-induced term $U^\top H_{n+1}^{\top}R_{n+1}^{-1}H_{n+1}U$ remains. The transformed problem can therefore still be strongly anisotropic, particularly when precise observations act on high-variance directions of the prior \cite{HabenLawlessNichols2011}. In this sense, prior whitening is not equivalent to posterior whitening.

Second-level preconditioners based on approximate Hessian information, Lanczos vectors, Ritz pairs, or Krylov subspaces can further reduce the effective conditioning of Eq.~\eqref{eq:cvt_hessian}, and can solve linear-Gaussian inner problems very efficiently \cite{TshimangaEtAl2008,GrattonTshimanga2009,FisherEtAl2009}. The distinction pursued in this paper is therefore not that ill-conditioned variational problems are intrinsically unsolvable, nor that classical preconditioning is ineffective. Rather, classical preconditioning changes the coordinates or effective spectrum of a \emph{fixed} local optimization problem. Within each inner loop, the solver still repeatedly acts on a single posterior-curvature operator, whether represented by $A$ in state space or by $A_{\mathrm{CVT}}$ in control space.

This fixed-geometry viewpoint provides the reference point for the flow-based construction developed next. Instead of repeatedly solving on one posterior curvature, flow-based DA reaches the same Bayesian target through a pseudo-time sequence of intermediate distributions with evolving curvature. Section \ref{sec:flow_geometry} shows how this change from fixed optimization to time-dependent transport leads naturally to the notion of posterior-geometry regularization.
\section{Flow-Based DA with Posterior-Geometry Regularization}
\label{sec:flow_geometry}

Section~\ref{sec:var_da} shows that incremental variational data assimilation repeatedly acts on a fixed local posterior-curvature operator, i.e., the Hessian matrix in Eq.~\eqref{eq:posterior_curvature_split}. We now investigate a different computational route to the same Bayesian update. Flow-based DA introduces a pseudo-time family of distributions connecting the target distribution to a simple Gaussian reference distribution and generates posterior samples through reverse transport. We use the ensemble score filter (EnSF) \cite{BaoEtAl2025,Bao2024EnSF,ZhangBaoZhang2025IEnSF} as a training-free realization of this construction. Our focus is the geometry induced by EnSF's pseudo-time transport and its relationship to the fixed posterior geometry used in variational optimization.

\subsection{Flow-based transport and EnSF}
\label{sec:ensf_transport}
Throughout, the state is assumed nondimensionalized componentwise, as is standard in EnSF implementations \cite{BaoEtAl2025,Bao2024EnSF}. This removes physical units but not cross-variable correlation, so the covariance of the scaled state can remain severely ill conditioned.

\subsubsection{The diffusion process for the prior distribution}\label{sec:EnSF_prior}
To compare variational optimization and flow-based transport on the same Bayesian update, we formulate EnSF in the increment space introduced in Section~\ref{sec:var_formulation}. We define the state increment as
\begin{equation}
    \Delta X := X_{n+1|n} -  \bar{x}_{n+1| n},
    \label{eq:forecast_increment}
\end{equation}
where $\bar{x}_{n+1\mid n}$ is the forecast mean given in Eq.~\eqref{eq:prior}. The predictive prior in increment space is denoted by $p_{\Delta X}$ and has zero mean by construction. Following the notations used in the standard EnSF framework\cite{BaoEtAl2025,Bao2024EnSF}, define a forward diffusion process on the pseudo-time interval $t\in[0,1]$ for the prior by
\begin{equation}
{\rm d} Z_{n+1|n,t}=b_t Z_{n+1|n,t}\,{\rm d}t+\sigma_t\,{\rm d}W_t, \qquad Z_{n+1|n,0} \sim p_{\Delta X},
    \label{eq:forward_sde}
\end{equation}
where $W_t$ is a standard Brownian motion and the subscript $(\cdot)_{n+1|n}$ represents that the stochastic process is built for the prior distribution $p_{\Delta X}$. The coefficients are parameterized
through a noise schedule $(\alpha_t,\beta_t)$ as
\begin{equation}
    b_t=\frac{d\log\alpha_t}{dt}, \qquad
    \sigma_t^2 = \frac{d\beta_t^2}{dt} - 2\frac{d\log\alpha_t}{dt}\beta_t^2.
    \label{eq:sde_coeff}
\end{equation}
The marginal distribution of Eq.~\eqref{eq:forward_sde} admits the representation 
\begin{equation}
    Z_{n+1|n,t} \overset{d}{=} \alpha_t Z_{n+1|n,0} + \beta_t\varepsilon, \qquad
    \varepsilon\sim\mathcal{N}(0,I),
    \label{eq:diffusion_marginal}
\end{equation}
with $\varepsilon$ independent of $Z_{n+1|n,0} = \Delta X$, so that $Z_{n+1|n,t}$ has the covariance matrix
\begin{equation}
\Sigma_t=\alpha_t^2B_{n+1| n}+\beta_t^2I,
\label{eq:sec5_diffused_cov}
\end{equation}
where $B_{n+1| n}$ is the background covariance matrix in Eq.~\eqref{eq:prior}. Throughout this paper including the numerical experiments we use the following definitions of $\alpha_t$ and $\beta_t$:
\begin{equation}
    \alpha_t=1-t, \qquad \beta_t^2=t, \qquad 0\le t\le 1,
    \label{eq:noise_schedule}
\end{equation}
so that the forward process moves from the target distribution at $t=0$ toward a standard Gaussian reference distribution at $t=1$. In this case, the prior score associated with $Z_{n+1|n,t}$ in Eq.~\eqref{eq:diffusion_marginal} can be written as
\begin{equation}
S_{n+1|n}(z,t)
=-\Sigma_t^{-1}\left(z - \alpha_t\, \mathbb{E}[\Delta X]\right)
=-\Sigma_t^{-1} z,
\label{eq:sec5_diffused_score}
\end{equation}
where the second equality holds because the increment is centered by construction, i.e., $\mathbb{E}[\Delta X]=0$. If a reference state other than the prior mean is used, the mean term must be retained. In practice the score is evaluated from the forecast ensemble through the sample covariance in Eq.~\eqref{eq:sec5_diffused_score} or the Monte Carlo estimator of \cite{Bao2024EnSF} without training a neural network, which is why this realization is training-free.

\subsubsection{The diffusion process for the posterior distribution}\label{sec:EnSF_post}
The central task in flow-based DA is to draw samples from the posterior distribution of the increment by mapping a standard Gaussian distribution to posterior distribution via a reverse-time SDE:
\begin{equation}
{\rm d}{Z}_{n+1|n+1,t}=
\left(b_t {Z}_{n+1|n+1,t}-\sigma_t^2S_{n+1|n+1}({Z}_{n+1|n+1,t},t)\right){\rm d}t+ \sigma_t\,{\rm d}\cev{W}_t,
\label{eq:ensf_reverse_sde_general_sec4}
\end{equation}
where $\cev{W}_t$ is a Brownian motion running backward in pseudo-time, and the subscript $(\cdot)_{n+1|n+1}$ represents that the stochastic process is built for the posterior distribution. To do this, we need to compute the posterior score function $S_{n+1|n+1}$ in Eq.~\eqref{eq:ensf_reverse_sde_general_sec4}. EnSF updates the prior score to the posterior score by adding likelihood guidance in the diffused space, written as
\begin{equation}
\begin{aligned}
    {S}_{n+1|n+1}(z,t)& ={S}_{n+1|n}(z,t)+h(t)\,\nabla_z\log p_{Y_{n+1}|\Delta X}(y_{n+1}|z)\\[5pt]
    & = -\Sigma_t^{-1} z -h(t) \nabla_z\mathcal{H}(z)^TR_{n+1}^{-1} (\mathcal{H}(z) - d_{n+1}),
\end{aligned}
\label{eq:ensf_posterior_score_baseline_sec4}
\end{equation}
where $h(t)$ is a decreasing damping function satisfying $h(0)=1$, $h(1)=0$, $R_{n+1}$ is the covariance matrix of the observation error, and $\mathcal{H}$ is the observation operator. This is the working EnSF formula used for posterior sampling: the prior score provides the generative structure in the reference space, while the likelihood score guides that structure toward consistency with the new data. At $t=0$, the score decomposition corresponds to the usual Bayesian decomposition of prior score plus likelihood score. Away from $t=0$, \eqref{eq:ensf_posterior_score_baseline_sec4} should be interpreted as a likelihood-guidance mechanism rather than a general exact nonlinear posterior score.

\begin{remark}[nonlinear observation operators and increment-space notation]
    Here $\mathcal{H}$ acts on the state increment through $\mathcal{H}(z):=\mathcal{H}_{n+1}(\bar{x}_{n+1\mid n}+z)-\mathcal{H}_{n+1}(\bar{x}_{n+1\mid n})$, so that $\mathcal{H}(z)-d_{n+1}=\mathcal{H}_{n+1}(\bar{x}_{n+1\mid n}+z)-y_{n+1}$. The likelihood guidance in Eq.~\eqref{eq:ensf_posterior_score_baseline_sec4} accepts a nonlinear observation operator directly, whereas incremental variational DA must rely on the local linearization of Section~\ref{sec:var_formulation}; when the linearization of $\mathcal{H}_{n+1}$ at $\bar{x}_{n+1\mid n}$ is not accurate across the range of states covered by the prior uncertainty, it introduces a second bottleneck beyond ill conditioning, since the Gauss--Newton curvature $A$ may no longer reflect the true posterior geometry. In previous work, the performance of EnSF under nonlinear observation operators has been shown to be close to its performance under linear ones \cite{BaoEtAl2025,liang_ensf_inpainting_2025,xiong2025sensitivity}. Since the focus of this paper is the ill-conditioned background covariance, the analysis below uses a linear observation operator, for which $\mathcal{H}(z)=H_{n+1}z$, and Sections~\ref{sec:exact_posterior_transport} and~\ref{sec:ensf_geometry_transport} should be read in that setting.
\end{remark}

\subsection{Posterior transport in the linear-Gaussian setting}
\label{sec:exact_posterior_transport}

To illustrate the posterior geometry of flow-based DA, we consider the linear-Gaussian
incremental problem from Section~\ref{sec:var_formulation}, meaning the observation operator is a linear operator, i.e.,

\begin{equation}\label{eq:lin_obs}
    \mathcal{H}_{n+1}(\bar{x}_{n+1\mid n}+\Delta X) = \mathcal{H}_{n+1}(\bar{x}_{n+1\mid n}) + H_{n+1}\Delta X,
\end{equation}
where $\Delta X$ is defined in Eq.~\eqref{eq:forecast_increment}, and the increment-space operator of the remark above reduces to $\mathcal{H}(z)=H_{n+1}z$. In this case, the exact posterior of $\Delta X$ is also a Gaussian distribution defined by
\begin{equation}
     \Delta X|d_{n+1} \sim \mathcal{N}(\mu,A^{-1})\;\; \text{ with }\;
    \mu=A^{-1} H_{n+1}^\top R_{n+1}^{-1}d_{n+1},
    \label{eq:exact_gaussian_posterior}
\end{equation}
where $d_{n+1}$ is the innovation defined in Eq.~\eqref{eq:d}, $R_{n+1}$ is the observation-error covariance defined in Eq.~\eqref{eq:obs_R}, and the curvature matrix $A$ is defined in Eq.~\eqref{eq:posterior_curvature_split}.

When the diffusion process of Section~\ref{sec:ensf_transport} is applied to the posterior distribution in Eq.~\eqref{eq:exact_gaussian_posterior}, i.e., with initial state $Z_{n+1\mid n+1,0}\sim\mathcal{N}(\mu,A^{-1})$, the exact intermediate states are
\begin{equation}
    Z_{n+1\mid n+1,t} = \alpha_t Z_{n+1\mid n+1,0}+\beta_t\varepsilon, \qquad
    \varepsilon\sim\mathcal{N}(0,I),
    \label{eq:diffused_posterior}
\end{equation}
with $\varepsilon$ independent of $Z_{n+1\mid n+1,0}$, so that
\begin{equation}
    Z_{n+1\mid n+1,t} \sim \mathcal{N}
    \left(\alpha_t \mu,\, \alpha_t^2 A^{-1} + \beta_t^2 I \right).
    \label{eq:diffused_posterior_cov}
\end{equation}
Hence the exact posterior score function along pseudo-time is
\begin{equation}
    S_{n+1|n+1,t}^\star(z,t) = - \left(\alpha_t^2 A^{-1} + \beta_t^2 I\right)^{-1} \left(z-\alpha_t \mu\right),
    \label{eq:exact_posterior_score}
\end{equation}
and the associated pseudo-time-dependent posterior curvature is
\begin{equation}
    \boxed{ A_t^\star :=  \left(  \alpha_t^2A^{-1}  +  \beta_t^2I \right)^{-1}.}
    \label{eq:exact_posterior_curvature}
\end{equation}

Eq.~\eqref{eq:exact_posterior_curvature} is the central geometric object for the flow-based route. Variational optimization repeatedly encounters the endpoint curvature $A$, whereas exact reverse transport starts from a near-isotropic reference geometry ($\beta_t \rightarrow 1$ and $\alpha_t \rightarrow 0$) and progressively introduces the anisotropy of $A$ ($\beta_t \rightarrow 0$ and $\alpha_t \rightarrow 1$).

\begin{proposition}[Regularization of posterior curvature]
\label{prop:posterior_curvature}
Let $A\succ0$ be the posterior curvature defined in Eq.~\eqref{eq:posterior_curvature_split} and let $A_t^\star$ be the pseudo-time-dependent posterior diffusion curvature defined in Eq.~\eqref{eq:exact_posterior_curvature}. If $A$ has eigendecomposition $A=Q\Lambda Q^\top$ with $\Lambda=\operatorname{diag}(\lambda_1,\ldots,\lambda_d)$ and  $\lambda_{\max}\ge \lambda_i\ge \lambda_{\min}>0$, then $A_t^\star$ has the following properties:
\vspace{0.2cm}
\begin{itemize}[leftmargin=20pt]
    \item The eigenvalues of $A_t^\star$ are
    \begin{equation}
    \lambda_i^\star(t)   = \frac{\lambda_i}{\alpha_t^2 + \beta_t^2\lambda_i}.
    \label{eq:exact_curvature_eigs}
\end{equation}
\item The condition number of $A_t^\star$ is
\begin{equation}
    \kappa(A_t^\star)  =  \kappa(A) \frac{\alpha_t^2+\beta_t^2\lambda_{\min}}{\alpha_t^2+\beta_t^2\lambda_{\max}}.
    \label{eq:exact_curvature_condition}
\end{equation}
\item Consequently, $\kappa(A_t^\star)\le\kappa(A)$ for all $t\in[0,1]$, with $\kappa(A_0^\star)=\kappa(A)$ and $\kappa(A_1^\star)=1$; under the schedule in Eq.~\eqref{eq:noise_schedule}, $\kappa(A_t^\star)$ decreases monotonically in $t$.
\end{itemize}
\end{proposition}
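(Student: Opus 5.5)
The plan is to diagonalize everything in the eigenbasis of $A$ and reduce all three claims to scalar statements. Since $A=Q\Lambda Q^\top$ with $Q$ orthogonal, we have $A^{-1}=Q\Lambda^{-1}Q^\top$ and $I=QQ^\top$. Hence
\begin{equation*}
\alpha_t^2A^{-1}+\beta_t^2 I=Q\left(\alpha_t^2\Lambda^{-1}+\beta_t^2 I\right)Q^\top .
\end{equation*}
This matrix is symmetric positive definite for every $t\in[0,1]$, because $\alpha_t^2$ and $\beta_t^2$ are nonnegative and never vanish simultaneously under Eq.~\eqref{eq:noise_schedule}. Inverting gives $A_t^\star=Q\,\operatorname{diag}\bigl((\alpha_t^2\lambda_i^{-1}+\beta_t^2)^{-1}\bigr)Q^\top$. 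Multiplying numerator and denominator by $\lambda_i$ yields Eq.~\eqref{eq:exact_curvature_eigs} directly.

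For the condition number, consider the scalar map $f_t(\lambda)=\lambda/(\alpha_t^2+\beta_t^2\lambda)$ on $(0,\infty)$. Its derivative is $f_t'(\lambda)=\alpha_t^2/(\alpha_t^2+\beta_t^2\lambda)^2\ge0$, so $f_t$ is nondecreasing. It follows that $\lambda_{\max}(A_t^\star)=f_t(\lambda_{\max})$ and $\lambda_{\min}(A_t^\star)=f_t(\lambda_{\min})$. Taking their ratio and regrouping gives Eq.~\eqref{eq:exact_curvature_condition}.

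The bound $\kappa(A_t^\star)\le\kappa(A)$ follows because $\lambda_{\min}\le\lambda_{\max}$ makes the correction factor at most one. The endpoint values come from the schedule. At $t=0$ we have $\alpha_0=1$ and $\beta_0=0$, so the factor equals $1$. At $t=1$ we have $\alpha_1=0$ and $\beta_1=1$, so the factor equals $\lambda_{\min}/\lambda_{\max}=1/\kappa(A)$, giving $\kappa(A_1^\star)=1$. At $t=1$ the monotonicity argument for $f_t$ degenerates, since $f_1\equiv1$ is constant, but then $A_1^\star=I$ and the claim is immediate.

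Monotonicity in $t$ is the only step needing real work. Under Eq.~\eqref{eq:noise_schedule} the factor is
\begin{equation*}
g(t)=\frac{(1-t)^2+t\lambda_{\min}}{(1-t)^2+t\lambda_{\max}}.
\end{equation*}
If $\lambda_{\min}=\lambda_{\max}$ the factor is constant and there is nothing to prove. Otherwise, for $t\in(0,1]$, I would set $s=(1-t)^2/t$, which is strictly decreasing from $+\infty$ to $0$. Then $g=(s+\lambda_{\min})/(s+\lambda_{\max})$, and its derivative in $s$ is $(\lambda_{\max}-\lambda_{\min})/(s+\lambda_{\max})^2\ge0$, so $g$ is nondecreasing in $s$. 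Composing with the decreasing map $t\mapsto s$ shows that $g$, and hence $\kappa(A_t^\star)=\kappa(A)g(t)$, is nonincreasing in $t$, and strictly decreasing when $\kappa(A)>1$. Continuity at $t=0$ extends the conclusion to the closed interval.

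The main obstacle is only bookkeeping: handling the degenerate endpoint $t=1$, and stating the monotonicity in the weak sense when $\lambda_{\min}=\lambda_{\max}$.
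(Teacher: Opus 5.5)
Your proof is correct and follows the paper's route: substitute $A=Q\Lambda Q^\top$ into $(\alpha_t^2A^{-1}+\beta_t^2I)^{-1}$ and read off the eigenvalues $\lambda_i/(\alpha_t^2+\beta_t^2\lambda_i)$. You also supply two steps the paper leaves implicit: that the extreme eigenvalues of $A_t^\star$ come from $\lambda_{\max}$ and $\lambda_{\min}$ (via $f_t'(\lambda)=\alpha_t^2/(\alpha_t^2+\beta_t^2\lambda)^2\ge0$), and the monotonicity in $t$, which you prove with the substitution $s=(1-t)^2/t$.
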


The proof of the proposition is straightforward by substituting the eigendecomposition of $A$ into the posterior curvature in Eq.~\eqref{eq:exact_posterior_curvature}. Since $A^{-1}=Q\,\operatorname{diag}(\lambda_1^{-1},\ldots,\lambda_d^{-1})\,Q^\top$ and $QQ^\top=I$,
\begin{equation}
    A_t^\star
    = Q\left(\alpha_t^2\Lambda^{-1}+\beta_t^2 I\right)^{-1}Q^\top
    = Q\,\operatorname{diag}\!\left(\frac{\lambda_1}{\alpha_t^2+\beta_t^2\lambda_1},\ldots,
      \frac{\lambda_d}{\alpha_t^2+\beta_t^2\lambda_d}\right)Q^\top.
    \label{eq:exact_curvature_substitution}
\end{equation}
 This gives Eq.~\eqref{eq:exact_curvature_eigs} directly and the ratio of $\lambda^\star_{\max}(t)$ and $\lambda^\star_{\min}(t)$ yields Eq.~\eqref{eq:exact_curvature_condition}.
An additional consequence is that $A_t^\star$ is a matrix function of $A$. Therefore all intermediate exact posterior curvatures commute and share the eigenvectors of the target posterior. In the linear setting, the exact flow does not rotate the stiff posterior directions; it progressively restores their relative curvature scales along the reverse stochastic process.

\subsection{Posterior-geometry transport induced by EnSF}
\label{sec:ensf_geometry_transport}

Section~\ref{sec:exact_posterior_transport} shows that, in the linear-Gaussian setting, diffusing the exact posterior distribution generates the pseudo-time-dependent posterior curvature defined in Eq.~\eqref{eq:exact_posterior_curvature}, which continuously connects the target posterior curvature $A$ at $t=0$ to the isotropic Gaussian curvature $I$ at $t=1$. The practical EnSF introduced in Section~\ref{sec:ensf_transport} defines a related but different posterior-geometry transport.

For a linear observation operator, Eq.~\eqref{eq:ensf_posterior_score_baseline_sec4} can be written as
\begin{equation}
\begin{aligned}
    S_{n+1\mid n+1}(z,t) &= -\Sigma_t^{-1}z - h(t) H_{n+1}^{\top} R_{n+1}^{-1}\left(H_{n+1}z-d_{n+1}\right)\\
    &= -\left(\Sigma_t^{-1} + h(t)H_{n+1}^{\top} R_{n+1}^{-1}H_{n+1}\right) z + h(t)\,H_{n+1}^{\top}R_{n+1}^{-1}d_{n+1}\\
    &=-A_t^{\mathrm{EnSF}}z+h(t)H_{n+1}^{\top}R_{n+1}^{-1}d_{n+1},
\end{aligned}
\label{eq:ensf_score_sec33}
\end{equation}
where the curvature matrix induced by EnSF is
\begin{equation}
    \boxed{A_t^{\mathrm{EnSF}}=\left(\alpha_t^2 B_{n+1\mid n}+\beta_t^2 I\right)^{-1}+h(t)H_{n+1}^{\top}R_{n+1}^{-1}H_{n+1}.}
    \label{eq:ensf_curvature_sec33}
\end{equation}
Thus EnSF also defines a pseudo-time-dependent posterior curvature, whose geometry can be characterized as follows.

\begin{proposition}[Regularization of the EnSF-induced curvature]
\label{prop:ensf_curvature}
Let the curvature matrix $A_t^{\mathrm{EnSF}}$ be defined in Eq.~\eqref{eq:ensf_curvature_sec33} with the schedule given in Eq.~\eqref{eq:noise_schedule} and a damping function $h(t)$ satisfying $h(0)=1$ and $h(1)=0$. Let $b_{\max}\ge b_{\min}>0$ denote the extreme eigenvalues of $B_{n+1\mid n}$ and set $G:=H_{n+1}^{\top}R_{n+1}^{-1}H_{n+1}\succeq0$. Then $A_t^{\mathrm{EnSF}}$ has the following properties:
\vspace{0.2cm}
\begin{itemize}[leftmargin=20pt]\itemsep0.2cm
    \item \emph{Endpoint consistency:} $ A_0^{\mathrm{EnSF}}=B_{n+1\mid n}^{-1}+H_{n+1}^{\top}R_{n+1}^{-1}H_{n+1}=A$ and $A_1^{\mathrm{EnSF}}=I$, so the EnSF's curvature transport path connects the same reference and target geometries as the exact path $A_t^\star$ in Eq.~\eqref{eq:exact_posterior_curvature}.
    \item \emph{Two-sided spectral bounds:} $\Sigma_t^{-1}+h(t)\lambda_{\min}(G)\,I \preceq A_t^{\mathrm{EnSF}}\preceq \Sigma_t^{-1}+h(t)\lambda_{\max}(G)\,I$, and hence every eigenvalue of $A_t^{\mathrm{EnSF}}$ satisfies
    \begin{equation}
        \frac{1}{\alpha_t^2 b_{\max}+\beta_t^2}+h(t)\,\lambda_{\min}(G) \;\le\;
        \lambda_i\!\left(A_t^{\mathrm{EnSF}}\right) \;\le\;
        \frac{1}{\alpha_t^2 b_{\min}+\beta_t^2}+h(t)\,\lambda_{\max}(G).
        \label{eq:ensf_curvature_eig_bounds}
    \end{equation}
    \item \emph{Condition-number bound:}
    \begin{equation}
    \begin{aligned}
        &\kappa\!\left(A_t^{\mathrm{EnSF}}\right)\;\le\; \kappa(\Sigma_t)+h(t)\,\lambda_{\max}(G)\,\big(\alpha_t^2 b_{\max}+\beta_t^2\big),\\[4pt]
        &\hspace{1.6cm}\kappa(\Sigma_t)=\frac{\alpha_t^2 b_{\max}+\beta_t^2}{\alpha_t^2 b_{\min}+\beta_t^2}\le\kappa\!\left(B_{n+1\mid n}\right),
    \end{aligned}
        \label{eq:ensf_curvature_cond_bound}
    \end{equation}
    so that $\kappa(A_t^{\mathrm{EnSF}})\to1$ as $t\to1$, meaning that the reverse transport starts on a near-isotropic geometry regardless of how anisotropic the prior is and how informative the observation is.
\end{itemize}
\end{proposition}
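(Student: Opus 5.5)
The proof treats the three bullets in order. All of them reduce to elementary facts about the two symmetric matrices $\Sigma_t^{-1}$ and $h(t)G$. Here $\Sigma_t=\alpha_t^2B_{n+1\mid n}+\beta_t^2 I$ as in Eq.~\eqref{eq:sec5_diffused_cov}. Under the schedule Eq.~\eqref{eq:noise_schedule}, $\alpha_t=1-t$ and $\beta_t^2=t$.

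\emph{Endpoint consistency.} At $t=0$ we have $\alpha_0=1$, $\beta_0=0$ and $h(0)=1$. Substituting into Eq.~\eqref{eq:ensf_curvature_sec33} gives $\Sigma_0^{-1}=B_{n+1\mid n}^{-1}$, so $A_0^{\mathrm{EnSF}}=A$ by Eq.~\eqref{eq:posterior_curvature_split}. At $t=1$ we have $\alpha_1=0$, $\beta_1=1$ and $h(1)=0$, so $A_1^{\mathrm{EnSF}}=I$.

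\emph{Two-sided spectral bounds.} The first step is the operator inequality $\lambda_{\min}(G)I\preceq G\preceq\lambda_{\max}(G)I$. Since $h(t)\ge0$ (it decreases from $1$ to $0$), multiplying by $h(t)$ preserves the ordering, and adding $\Sigma_t^{-1}$ gives the stated Loewner sandwich.

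Next, diagonalize $B_{n+1\mid n}=V\operatorname{diag}(b_i)V^\top$. Then $\Sigma_t^{-1}=V\operatorname{diag}\bigl((\alpha_t^2b_i+\beta_t^2)^{-1}\bigr)V^\top$, so its eigenvalues lie between $(\alpha_t^2b_{\max}+\beta_t^2)^{-1}$ and $(\alpha_t^2b_{\min}+\beta_t^2)^{-1}$. Combining this with the sandwich via Weyl's inequalities (or Courant--Fischer applied to the Loewner order) yields Eq.~\eqref{eq:ensf_curvature_eig_bounds}.

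\emph{Condition-number bound.} Write $s_{\max}:=\alpha_t^2b_{\max}+\beta_t^2$ and $s_{\min}:=\alpha_t^2b_{\min}+\beta_t^2$. Bound $\lambda_{\max}(A_t^{\mathrm{EnSF}})$ above by $1/s_{\min}+h\lambda_{\max}(G)$. Bound $\lambda_{\min}(A_t^{\mathrm{EnSF}})$ below by $1/s_{\max}$, dropping the nonnegative term $h\lambda_{\min}(G)$. Dividing gives
\[
\kappa(A_t^{\mathrm{EnSF}})\le \frac{s_{\max}}{s_{\min}}+h(t)\lambda_{\max}(G)\,s_{\max},
\]
which is exactly Eq.~\eqref{eq:ensf_curvature_cond_bound} with $\kappa(\Sigma_t)=s_{\max}/s_{\min}$.

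The inequality $\kappa(\Sigma_t)\le\kappa(B_{n+1\mid n})=b_{\max}/b_{\min}$ follows by cross-multiplying:
\[
(\alpha^2b_{\max}+\beta^2)\,b_{\min}\le(\alpha^2b_{\min}+\beta^2)\,b_{\max}
\iff \beta^2 b_{\min}\le\beta^2 b_{\max},
\]
which holds trivially.

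For the limit, as $t\to1$ we have $\alpha_t\to0$ and $\beta_t\to1$, so $\kappa(\Sigma_t)\to1$ and $s_{\max}\to1$. Continuity of $h$ at $t=1$ gives $h(t)\to0$, so the right-hand side tends to $1$. Since $\kappa\ge1$, this forces $\kappa(A_t^{\mathrm{EnSF}})\to1$.

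\emph{Main obstacle.} The only delicate point is this limit: it needs continuity of $h$ at $t=1$, which is not literally among the stated hypotheses, so I would state it as an implicit assumption. A second, minor point is noting that $h\ge0$ is needed for the Loewner sandwich; this follows from $h$ being decreasing with $h(1)=0$. Everything else is routine.
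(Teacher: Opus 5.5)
Your proof is correct and follows the paper's argument: substitution at the endpoints, then the Loewner sandwich $h(t)\lambda_{\min}(G)I\preceq h(t)G\preceq h(t)\lambda_{\max}(G)I$ combined with Weyl monotonicity and the extreme eigenvalues of $\Sigma_t^{-1}$, then dividing the eigenvalue bounds after dropping $h(t)\lambda_{\min}(G)\ge 0$. You also fill in two points the paper leaves implicit: the cross-multiplication showing $\kappa(\Sigma_t)\le\kappa(B_{n+1\mid n})$, and the remark that $\kappa(A_t^{\mathrm{EnSF}})\to1$ needs $h(t)\to0$ as $t\to1$, which monotonicity plus $h(1)=0$ does not guarantee.
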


\begin{proof}
The endpoint values follow from $(\alpha_0,\beta_0,h(0))=(1,0,1)$ and $(\alpha_1,\beta_1,h(1))=(0,1,0)$. The two-sided bound follows from $h(t)\lambda_{\min}(G)I \preceq h(t)G\preceq h(t)\lambda_{\max}(G)I$ and Weyl's monotonicity theorem applied to $A_t^{\mathrm{EnSF}}=\Sigma_t^{-1}+h(t)G$, using $\lambda_{\max}(\Sigma_t^{-1})=(\alpha_t^2b_{\min}+\beta_t^2)^{-1}$ and $\lambda_{\min}(\Sigma_t^{-1})=(\alpha_t^2b_{\max}+\beta_t^2)^{-1}$. Dividing the upper bound on $\lambda_{\max}(A_t^{\mathrm{EnSF}})$ by the lower bound on $\lambda_{\min}(A_t^{\mathrm{EnSF}})$ and dropping the nonnegative term $h(t)\lambda_{\min}(G)$ yields the condition-number bound.
\end{proof}

Proposition~\ref{prop:ensf_curvature} extends the regularization mechanism of Proposition~\ref{prop:posterior_curvature} to the practical EnSF path. The damping function plays for the observation-induced curvature the same role that the diffusion plays for the prior-induced curvature: it switches the term $h(t)H_{n+1}^{\top}R_{n+1}^{-1}H_{n+1}$ off at the reference end, so the reverse transport begins on a near-isotropic geometry no matter how informative the observation is. The second term of the bound in Eq.~\eqref{eq:ensf_curvature_cond_bound} quantifies the price of carrying observation information along the path: it is largest when precise observations act on high-variance prior directions, in which case $\lambda_{\max}(G)\,b_{\max}$ is of the order of $B_{00}/\sigma_{\rm obs}^2$, i.e., the stiff mode that dominates the fixed geometries $A$ and $A_{\mathrm{CVT}}$ in the experiments of Section~\ref{sec:numerical_experiments}, while the transport tames it through the damping factor $h(t)$. Unlike $A_t^\star$, the matrix $A_t^{\mathrm{EnSF}}$ is generally not a matrix function of $A$: the diffused-prior term and the damped observation term do not commute, so the EnSF path may rotate eigenvectors along the way, and the proposition provides bounds rather than identities; the commuting case shows that the bounds are sharp.

Therefore, viewed in forward pseudo-time, EnSF transports the posterior curvature from $A$ toward the standard-Gaussian curvature $I$; viewed in the reverse direction used for posterior sampling, it follows
\begin{equation}
    I \longrightarrow A_t^{\mathrm{EnSF}} \longrightarrow A \;\; \text{ as }\;\; t \rightarrow 0.
\label{eq:ensf_geometry_path_sec33}
\end{equation}
In this sense, the practical EnSF retains the central geometric mechanism identified in Section~\ref{sec:exact_posterior_transport}: it replaces repeated computation on the fixed posterior curvature by transport through a sequence of intermediate geometries.

In this sense, the practical EnSF retains the central geometric mechanism identified in Section~\ref{sec:exact_posterior_transport}: it replaces repeated computation on the fixed posterior curvature by transport through a sequence of intermediate geometries. The exact path $A_t^\star$ is the theoretical reference for this mechanism, while $A_t^{\mathrm{EnSF}}$ is its practical realization: it is built from the diffused prior and the damped likelihood alone, so its intermediate geometry can be evaluated without the posterior covariance $A^{-1}$.

The EnSF path in Eq.~\eqref{eq:ensf_curvature_sec33} is generally not identical to the exact curvature path in Eq.~\eqref{eq:exact_posterior_curvature} for $0<t<1$. The exact path depends on the posterior covariance $A^{-1}$, whereas the EnSF path is constructed directly from the diffused-prior geometry $\left(\alpha_t^2 B_{n+1\mid n}+\beta_t^2 I\right)^{-1}$ and the damped likelihood information. By Proposition~\ref{prop:ensf_curvature}, the two paths nevertheless share the same endpoint curvature matrices. The exact path therefore provides a theoretical reference for posterior-geometry regularization, while EnSF provides a practical realization whose intermediate geometry can be evaluated without explicitly constructing the exact diffused posterior covariance.

This distinction becomes particularly important for nonlinear observation operators. In that case, an exact analogue of $A_t^\star$ would require the geometry of the diffused nonlinear posterior, which is generally unavailable in closed form and would itself be difficult to construct. EnSF avoids this requirement. As shown in Eq.~\eqref{eq:ensf_posterior_score_baseline_sec4}, it evaluates the likelihood guidance directly through the nonlinear observation operator and combines it with the diffused-prior score. The resulting score field is no longer characterized by a single global curvature matrix such as $A_t^{\mathrm{EnSF}}$, but it still defines a pseudo-time transport whose geometry evolves from the Gaussian reference toward the target posterior. Accordingly, the linear analysis above should be viewed as an explicit curvature characterization of the more general geometry-transport mechanism implemented by EnSF.
\section{Numerical experiments}
\label{sec:numerical_experiments}

This section tests the central claim of the paper, proceeding from controlled linear-Gaussian cases to a high-dimensional nonlinear benchmark. The linear-Gaussian cases are constructed so that the perturbed variational baselines sample the exact posterior once their inner optimization is solved accurately. This removes the statistical model as a possible cause of failure, so the tests directly expose the numerical limits of the variational method. Throughout, we run the practical EnSF of Section~\ref{sec:ensf_geometry_transport} rather than the exact posterior transport of Section~\ref{sec:exact_posterior_transport}, because it uses only the prior ensemble and the likelihood, without the exact posterior covariance $A^{-1}$; this way the comparison reports the performance of the practical EnSF, rather than of an exact flow-based posterior sampler that would mirror the perturbed variational baselines. The numerical results support four main findings:
\vspace{0.1cm}
\begin{itemize}[leftmargin=20pt]\itemsep0.1cm
\item \emph{The principal difficulty is numerical rather than statistical.} In the linear-Gaussian cases, perturbed variational updates recover the exact posterior when the inner problem is solved accurately. Their degradation under ill conditioning therefore reflects the computational route to the posterior, rather than a different statistical target.

\item \emph{First-level preconditioning is highly effective, but its effectiveness depends on the source of posterior anisotropy.} CVT largely removes prior-induced ill conditioning, but highly informative observations can reintroduce severe anisotropy in control space, producing the narrow stability window.

\item \emph{EnSF trades asymptotic exactness for robustness across
conditioning regimes.} A single EnSF configuration remains stable across all test cases without case-specific step-size or preconditioner adjustment. Its posterior approximation is not as accurate as a fully converged and carefully tuned exact variational sampler, but it remains effective at fixed practical budgets across large changes in posterior conditioning.

\item \emph{The observed robustness is consistent with posterior-geometry regularization and persists in the nonlinear benchmark.} The curvature paths show that the variational method repeatedly encounters a fixed endpoint geometry, whereas EnSF approaches the same endpoint through evolving intermediate geometries. In the heterogeneous Lorenz--96 experiment, the same transport route provides the most stable state tracking and lower delivered computational cost among the methods tested.
\end{itemize}
\vspace{0.2cm}

To isolate conditioning effects from nonlinear-observation effects, all experiments in this section use the linear observation model on the observed coordinates,
\begin{equation}
    y = x + E, \qquad E \sim \mathcal{N}(0,\sigma_{\rm obs}^2 I).
    \label{eq:numerical_obs}
\end{equation}
\vspace{-0.5cm}
\begin{remark}[Reproducibility]
    The numerical results presented can be reproduced using the code on \href{https://github.com/Siming-Liang/Flow-Based-Transport}{https://github.com/Siming-Liang/Flow-Based-Transport}.
\end{remark}

\subsection{Baselines, configuration, and evaluation}
\label{subsec:baselines}

We compare the flow-based update with ensemble-variational baselines that share the same background covariance information.

\emph{Three-dimensional ensemble-variational assimilation (3DEnVar).} Each ensemble member minimizes the incremental objective \eqref{eq:incremental_var} in state-increment coordinates by fixed-step gradient descent, with gradients computed by automatic differentiation, and with the background term centered at the member's own forecast.

\emph{The CVT method.} The same per-member problem is minimized in control space after the control-variable transform \eqref{eq:cvt}, with $U=L$ a Cholesky factor of the (eigenvalue-regularized) background covariance. A stable step size is available in closed form: for a selection-type observation operator, the largest eigenvalue of the CVT Hessian \eqref{eq:cvt_hessian} satisfies \cite{HabenLawlessNichols2011}
\begin{equation}
\lambda_{\max}\big(I+L^{\top}H^{\top}R^{-1}HL\big)\;\leq\;1+\lambda_{\max}(B_{n+1\mid n})\,\max_i \big(R^{-1}\big)_{ii},
\label{eq:cvt_step_bound}
\end{equation}

In the Gaussian tests below we additionally report scans over fixed step sizes to expose the stability boundary of the first-order iteration. In the Lorenz--96 experiment the CVT step is set by the adaptive step
\begin{equation}
\eta_{\mathrm{eff}}=\min\Big\{\eta_0,\;0.9\,\big/\,\big(1+\lambda_{\max}(B_{n+1\mid n})\max_i (R^{-1})_{ii}\big)\Big\}
\label{eq:cvt_adaptive_step}
\end{equation}
which is guaranteed to lie below the gradient-descent stability threshold $2/\lambda_{\max}$. The factor $0.9$ is a fine-tuned safety margin.

\emph{Perturbed-observation variants.} Each member assimilates its own perturbed observation $y_{n+1}+\epsilon^{(k)}$,
$\epsilon^{(k)}\sim\mathcal{N}(0,R_{n+1})$, with the perturbations centered over the ensemble. With identical observations, every member is driven toward the same minimizer and the analysis spread is systematically too small; with perturbed observations, the analysis ensemble reproduces exactly the linear-Gaussian posterior mean and covariance once the inner problem is solved accurately \cite{Bardsley2014RTO,Burgers1998}. The perturbed variants are therefore exact posterior samplers in the Gaussian tests below, so the comparisons measure the numerical route rather than the statistical target.

\emph{The EnSF method.} The flow-based update of Section~\ref{sec:flow_geometry} is run with the single common configuration of Section~\ref{sec:ensf_geometry_transport}: every component follows the equations there, with damping function $h(t)=1-t$, and only the reverse-step budget varies across experiments; reported iteration counts refer to the number of reverse-SDE pseudo-time discretization steps, not to the ensemble size.

\emph{Evaluation metrics.} Because the exact posterior is available analytically in the Gaussian tests, we evaluate uncertainty quantification directly through the Gaussian Kullback--Leibler (KL) divergence between the empirical posterior approximation and the exact posterior,
\begin{equation}
D_{\mathrm{KL}}\bigl(\mathcal{N}(\hat m,\hat C)\,\|\,\mathcal{N}(m,C)\bigr)
=\frac12\left[\operatorname{tr}(C^{-1}\hat C)+(m-\hat m)^\top C^{-1}(m-\hat m)-d+\log\frac{\det C}{\det \hat C}\right].
\label{eq:gaussian_kl}
\end{equation}
The exact posterior $\mathcal{N}(m^{a}_{\mathrm{ref}},C^{a}_{\mathrm{ref}})$ is available  while the empirical posterior $\mathcal{N}(\hat m^{a},\hat C^{a})$ is estimated from $10{,}000$ ensemble members. This metric penalizes both mean error and covariance error, and is therefore more informative than trajectory error alone. Note that the KL estimate has a sampling floor: two independent $10{,}000$-member samples of the same $10$-dimensional Gaussian differ by a KL of approximately $5\times10^{-3}$, so values at that level indicate agreement at the resolution of the estimator.

\subsection{Linear-Gaussian ellipse tests}
\label{subsec:gaussian_ellipse_tests}

We begin with Gaussian tests because they isolate posterior geometry from other sources of difficulty, and because the perturbed variational baselines are exact posterior samplers in this setting. What the tests measure is therefore the numerical behavior of each route: how sensitively the variational iteration depends on its step size and iteration budget under ill conditioning, and how stable the flow-based transport is under one common configuration. The prior is constructed in dimension \(d=10\) as
\begin{equation}
\Sigma = Q\Lambda Q^\top + \varepsilon I, \qquad \varepsilon=10^{-3},
\label{eq:gaussian_prior_covariance}
\end{equation}
where the first column of \(Q\) is the normalized all-ones vector $v=\mathbf{1}/\|\mathbf{1}\|_2$, and the remaining columns complete an orthonormal basis. This construction creates a dominant correlated direction and allows us to control anisotropy through the eigenvalue spectrum \(\Lambda\).

\begin{table}[!htb]
\centering
\caption{Gaussian test cases. All methods use the same prior ensemble; the perturbed variational baselines are exact posterior samplers in every case. The eigenvector associated with the largest eigenvalue is referred to as the dominant direction. The cases isolate the three conditioning regimes studied in this paper: strong prior anisotropy (Case~2), weakly observed directions (Case~3), and highly informative observations (Case~4); Case~1 is a well-conditioned control.}
\label{tab:gaussian_cases}
\small
\begin{tabular}{c l l l}
\toprule
\textbf{Case} & \textbf{Prior geometry} & \textbf{Observed coordinates} & \textbf{Observation std} \\
\midrule
1 & Well-conditioned, $\kappa \approx 10$ & dominant ($10\%$) & $\sigma_{\rm obs}=0.1$ \\
2 & Ill-conditioned, $\kappa \approx 1.5 \times 10^8$ & all ($100\%$) & $\sigma_{\rm obs}=50$ \\
3 & Same as Case 2 & dominant ($10\%$) & $\sigma_{\rm obs}=5$ \\
4 & Same as Case 2 & dominant ($10\%$) & $\sigma_{\rm obs}=0.1$ \\
\bottomrule
\end{tabular}
\end{table}

We consider four cases, summarized in Table~\ref{tab:gaussian_cases}. Case~1 is moderately anisotropic and well conditioned: the eigenvalues are approximately $\{10,1, \ldots, 1\}$ with condition number \(\kappa\approx 10\), and only the dominant direction is observed. Case~2 is strongly anisotropic and ill conditioned: the eigenvalues are approximately $\{10^6,\;10^5, \ldots, 10^{-2},\; 10^{-3}\}$ with condition number \(\kappa\approx 1.5\times 10^8\), under full observation. Case~3 uses the same ill-conditioned prior, but only the dominant component is observed. Case~4 keeps the sparse observation of Case~3 and reduces the observation standard deviation to $\sigma_{\rm obs}=0.1$, which makes the observed direction extremely precise relative to the prior spread.

\paragraph{\bf Case 1: moderate anisotropy with partial observation}
Case~1 serves as a control case. The prior is only mildly anisotropic, so the likelihood constraint $y=2$ with $\sigma_{\rm obs}=0.1$ produces a clear posterior target rather than a narrow ill-conditioned valley: the observation sharply constrains the observed coordinate, while the prior correlation determines the conditional spread in the unobserved coordinate. Figure~\ref{fig:case1_results} compares the reference posterior with EnSF, unperturbed 3DEnVar, and perturbed 3DEnVar. The distinction that matters here is not speed but spread: without perturbed observations, every member is pulled toward the same minimizer, so the analysis ensemble collapses in the observed direction and cannot represent the posterior uncertainty even in this benign geometry. Adding observation perturbations restores the correct spread.

Table~\ref{tab:case1_kl} reports the KL values. At practical budgets the methods are comparable: EnSF reaches KL $0.28$ within $100$ steps and is insensitive to the budget, while perturbed 3DEnVar reaches its minimum KL $0.17$ at convergence ($383$ iterations at the stable step $\eta=0.01$). The residual KL of the perturbed method reflects the convergence threshold ($10^{-6}$ on the cost change) rather than a statistical limitation: as an exact sampler, it approaches the sampling floor of Section~\ref{subsec:baselines} when the inner problem is solved to higher accuracy. The takeaway from Case~1 is that the variational method is fully adequate in well-conditioned geometry provided perturbed observations are used, and that EnSF is competitive there without any case-specific adjustment.

\begin{figure}[!htb]
    \centering
    \includegraphics[width=0.85\linewidth]{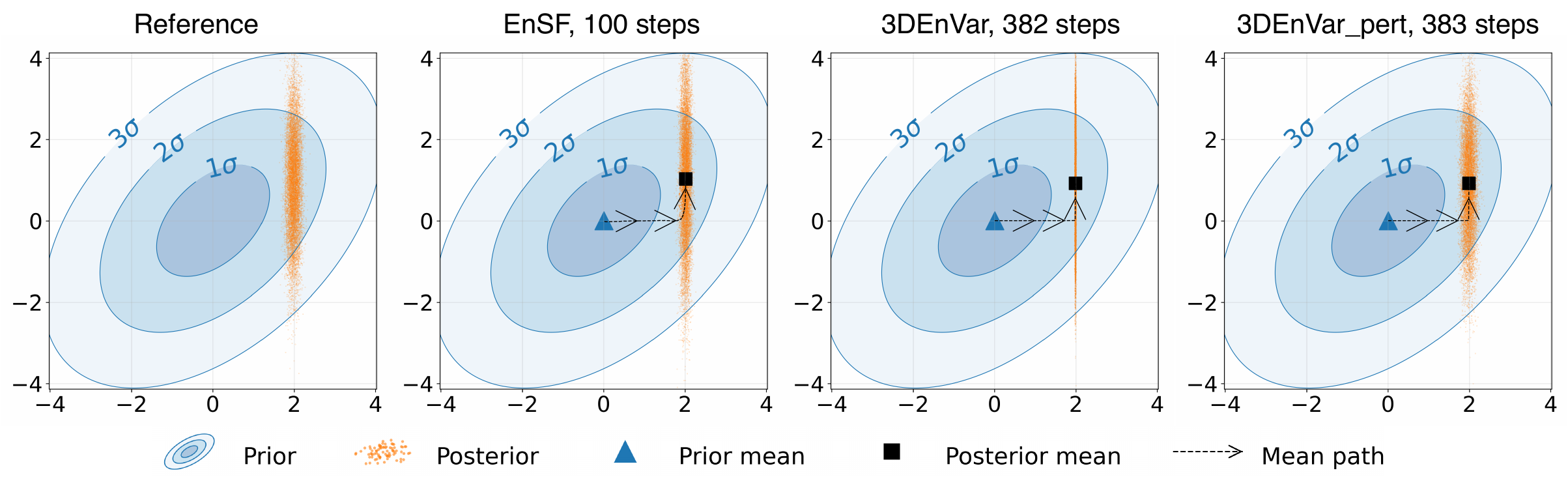}
    \caption{Case~1: well-conditioned control case, shown in the plane of the observed dominant direction (horizontal) and an unobserved direction (vertical). Without perturbed observations the analysis collapses in the observed direction even in this benign geometry; with perturbations the variational method is fully adequate, and EnSF matches it with the same common configuration used in all cases.}
    \label{fig:case1_results}
\end{figure}

\begin{table}[!htb]
\centering
\caption{Case~1: KL divergence at fixed iteration budgets. Step sizes are close to the largest stable values; the variational runs stop when the cost change falls below $10^{-6}$. At this level of conditioning the two routes are comparable: perturbed 3DEnVar attains the lower KL at convergence, while EnSF is insensitive to its budget.}
\label{tab:case1_kl}
\small
\begin{tabular}{cccccc}
\toprule
\textbf{Method} & \textbf{Step size $\eta$} & \textbf{20 steps} & \textbf{50 steps} & \textbf{100 steps} & \textbf{Min KL (iter.)} \\
\midrule
3DEnVar & $0.01$ & $0.78$ & $0.69$ & $0.57$ & $0.22$ ($382$) \\
3DEnVar (perturbed) & $0.01$ & $0.76$ & $0.67$ & $0.55$ & $0.17$ ($383$) \\
EnSF & --- & $0.33$ & $0.31$ & $0.28$ & $0.26$ ($300$) \\
\bottomrule
\end{tabular}
\end{table}

\paragraph{\bf Case 2: extreme anisotropy with full observation}
Case~2 changes the conditioning, and the observation operator now sees all coordinates. Case~2 is not difficult because observations are missing; it is difficult because the correct posterior uncertainty must be reconstructed inside a severely ill-conditioned geometry. The prior ensemble is highly elongated and concentrated near a strongly correlated direction, with condition number $\kappa(B)\approx 1.5\times 10^8$. In this setting, the use of full observations makes the posterior center clearly identifiable, so locating the posterior mean is not the main challenge. To make the posterior covariance visible in the two-dimensional projection, we set the observation vector to $\mathbf{y}=600\cdot\mathbf{1}_{10}$ and use a relatively large observation standard deviation, $\sigma_{\rm obs}=50$. If the observation noise were much smaller, the posterior would collapse visually to an almost point mass, obscuring the comparison of posterior spread. With $\sigma_{\rm obs}=50$, the posterior still has a visible elliptical shape, so the experiment can test whether a method recovers not only the posterior mean but also the residual uncertainty.

Figure~\ref{fig:case2_all} and Table~\ref{tab:case2_kl} summarize the results, and three observations capture the case. First, state-space gradient descent is impractically slow: at the stable step $\eta=0.006$, 3DEnVar reaches its convergence threshold only after about $1.6\times10^{6}$ iterations (stopping at iteration $1{,}629{,}398$), and its final KL of $3.31$ still reflects a too-narrow spread; at $\eta=0.007$ the iteration diverges. Second, the CVT repairs the speed but not the spread: at $\eta=0.004$ it converges within about $110$ iterations, but without perturbations the analysis collapses and the KL stalls near $5$; with perturbed observations the CVT reaches KL $0.005$ --- the sampling floor --- within $130$ iterations, consistent with its exact-sampler property. Third, the flow-based update is stable and accurate without any adjustment: EnSF reaches KL $0.12$ at $20$ steps and $0.07$ at $200$ steps. The case also exposes the step-size cliff of the first-order variational method: for both CVT variants, $\eta=0.004$ converges within roughly a hundred iterations while $\eta=0.005$ diverges. The stable window must be found per problem; the adaptive bound \eqref{eq:cvt_adaptive_step} provides a safe step automatically, at the price of conservativeness.

\begin{figure}[!htb]
    \centering    \includegraphics[width=0.75\linewidth]{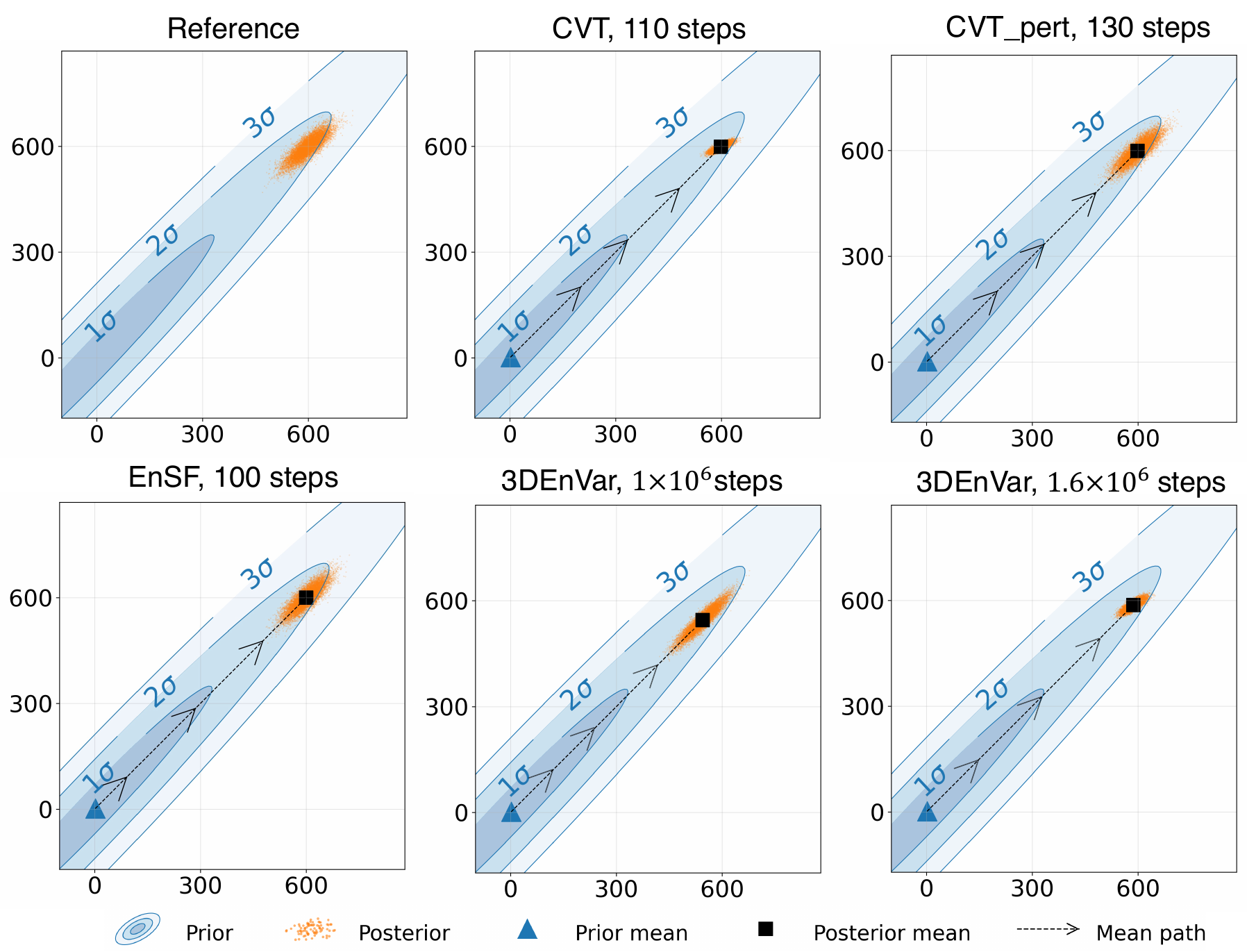}
    \caption{Case~2: strong prior anisotropy ($\kappa(B)\approx1.5\times10^{8}$) under full observation, projected onto the plane of the two leading prior directions. Unperturbed CVT collapses the spread; perturbed CVT matches the reference; EnSF tracks the reference with $100$ steps; 3DEnVar remains too narrow even at convergence ($\approx1.6\times10^{6}$ iterations) --- under severe ill conditioning it is the state-space iteration that fails, not the statistical model.}
    \label{fig:case2_all}
\end{figure}

\begin{table}[!htb]
\centering
\caption{Case~2 (full observation): KL divergence at fixed iteration budgets. ``$\infty$'' denotes divergence of the iteration. The perturbed CVT at $\eta=0.004$ reaches the sampling floor. Preconditioning restores the speed that ill conditioning takes from the state-space iteration; EnSF reaches comparable accuracy with $100$ steps and no per-case tuning.}
\label{tab:case2_kl}
\small
\begin{tabular}{cccccc}
\toprule
\textbf{Method} & \textbf{Step size $\eta$} & \textbf{20 steps} & \textbf{50 steps} & \textbf{100 steps} & \textbf{Min KL (iter.)} \\
\midrule
3DEnVar & $0.007$ & $1236$ & $1236$ & $1236$ & $\infty$ (diverges) \\
3DEnVar & $0.006$ & $1236$ & $1236$ & $1235$ & $3.31$ ($1.6\times10^{6}$) \\
CVT & $0.005$ & $1453$ & $1964$ & $3244$ & $\infty$ (diverges) \\
CVT & $0.004$ & $3.16$ & $4.13$ & $5.08$ & $5.07$ ($110$) \\
CVT (perturbed) & $0.005$ & $1454$ & $1965$ & $3246$ & $\infty$ (diverges) \\
CVT (perturbed) & $0.004$ & $0.35$ & $0.08$ & $0.009$ & $0.005$ ($130$) \\
EnSF & --- & $0.12$ & $0.11$ & $0.09$ & $0.07$ ($200$) \\
\bottomrule
\end{tabular}
\end{table}

\paragraph{\bf Case 3: extreme anisotropy with partial observation}
In Case~3, the prior distribution is the same severely ill-conditioned Gaussian used in Case~2, with condition number $\kappa(B)\approx 1.5\times 10^8$. However, instead of observing all state components, we only observe the dominant direction. Even though the observation is precise in the dominant observed direction, the remaining state components must be inferred through the highly ill-conditioned prior covariance. Thus, the posterior target is clear in the observed coordinate but still difficult in the unobserved directions. The observation value is still set to $y=600$, but the observation standard deviation is reduced to $\sigma_{\rm obs}=5$. This small observation noise tightly constrains the observed coordinate, so the posterior samples concentrate near the observed value; in the two-dimensional projection this produces a narrow band rather than the broader posterior ellipse of Case~2. Because the remaining directions are not directly observed, the posterior does not collapse to a point: the uncertainty in the unobserved components is determined indirectly by the conditional structure of the ill-conditioned prior. This case therefore tests whether a method can propagate precise information from the observed direction into the unobserved directions without destroying the posterior covariance.

\begin{figure}[!htb]
    \centering
    \includegraphics[width=0.9\linewidth]{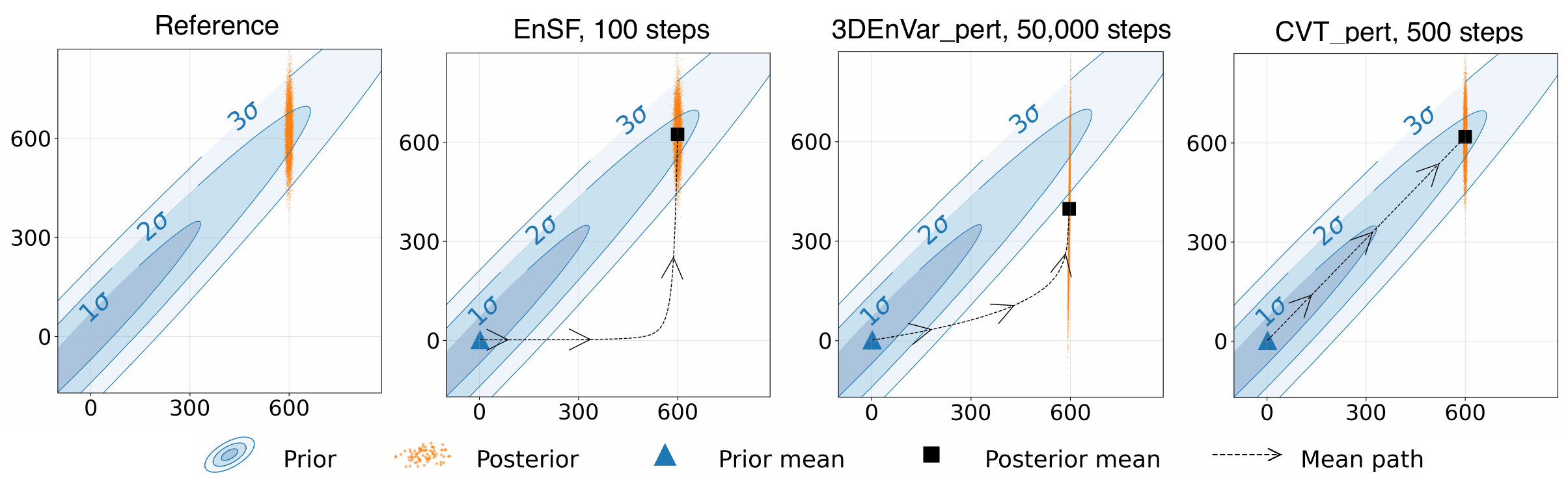}
    \caption{Case~3: same ill-conditioned prior with weakly observed directions: only the dominant component observed. EnSF and the perturbed CVT recover both the posterior mean and the anisotropic conditional spread; the perturbed 3DEnVar fits the observed coordinate but misplaces the mean and distorts the spread --- at realistic budgets the ill-conditioned state-space iteration cannot propagate observation information into unobserved directions.}
    \label{fig:case3_results}
\end{figure}
\begin{table}[!htb]
\centering
\caption{Case~3 (sparse observation): KL divergence at fixed iteration budgets. The state-space iteration remains far from the posterior at every budget shown, while the perturbed CVT and EnSF both recover it; EnSF does so with the common configuration.}
\label{tab:case3_kl}
\small
\begin{tabular}{cccccc}
\toprule
\textbf{Method} & \textbf{Step size $\eta$} & \textbf{50 steps} & \textbf{100 steps} & \textbf{500 steps} & \textbf{50{,}000 steps} \\
\midrule
3DEnVar (perturbed) & $0.005$ & --- & --- & --- & $99$ \\
CVT (perturbed) & $9.1\times10^{-5}$ & $196$ & $22$ & $0.007$ & --- \\
EnSF & --- & $0.11$ & $0.09$ & $0.06$ & --- \\
\bottomrule
\end{tabular}
\end{table}
Figure~\ref{fig:case3_results} and Table~\ref{tab:case3_kl} summarize the results. EnSF transports the ensemble toward the reference posterior in both coordinates: the mean is recovered in the observed and the unobserved coordinate, the uncertainty is sharply reduced in the observed direction, and the conditional spread in the unobserved direction is retained. The perturbed CVT again behaves as designed, reaching KL $0.007$ within $500$ iterations at its stable step ($\eta=9.1\times10^{-5}$), although at small budgets it is still far from the target (KL $196$ at $50$ and $22$ at $100$ iterations); EnSF is more accurate at those budgets (KL $0.11$ and $0.09$) and insensitive to the budget. The perturbed state-space 3DEnVar fails in this regime: after $50{,}000$ iterations its KL is still $99$, because the observation information is fitted in the observed coordinate but not propagated correctly through the ill-conditioned prior covariance into the unobserved components, and the resulting spread is distorted. Case~3 is the right regime to separate \emph{mean fit} from \emph{posterior geometry}: a method can align its mean with the observed component and still misrepresent the posterior in the unobserved directions.

\paragraph{\bf Case 4: extreme anisotropy with a precise sparse observation}
Case~4 keeps the prior and the sparse observation pattern of Case~3 but reduces the observation standard deviation to $\sigma_{\rm obs}=0.1$. This makes the observed direction extremely precise relative to the prior spread, and it is the regime in which first-level preconditioning runs out of headroom. For a single observed coordinate, the CVT Hessian \eqref{eq:cvt_hessian} is a rank-one update of the identity with largest eigenvalue $1+B_{00}/\sigma_{\rm obs}^{2}$, where $B_{00}$ is the prior variance of the observed coordinate; in this case that eigenvalue is of order $10^{7}$, so the CVT Hessian itself is severely ill conditioned despite the transform (see Section~\ref{subsec:conditioning_transport}). This is precisely the regime for which second-level preconditioning was developed in the variational literature (Section~\ref{sec:classical_preconditioning}).
\begin{figure}[!htb]
    \centering
    \includegraphics[width=0.75\linewidth]{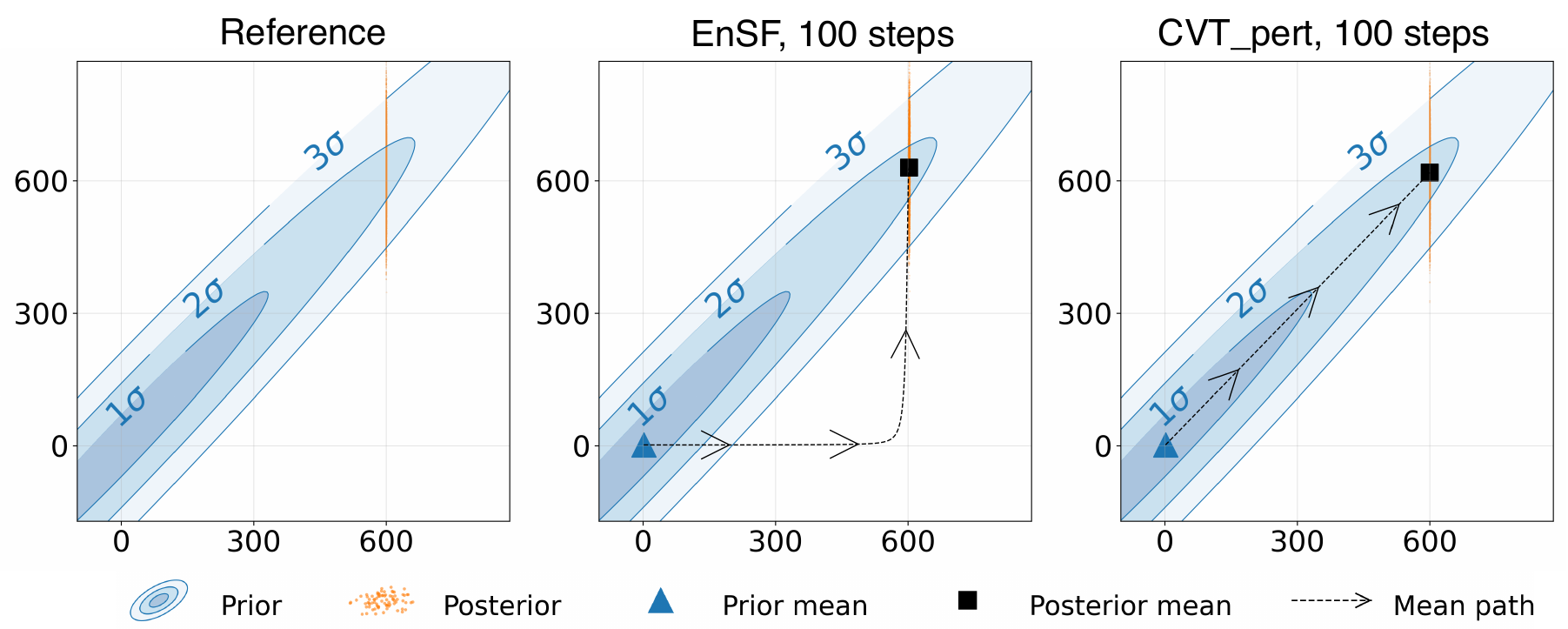}
    \caption{Case~4: precise sparse observation ($\sigma_{\rm obs}=0.1$; the perturbed CVT uses $\eta=10^{-7}$). Both recover the narrow posterior band and its conditional spread; the EnSF mean path first travels along the observed coordinate and then propagates the information into the unobserved direction through the prior correlation. This is the regime where first-level preconditioning reaches its limit ($\kappa\approx10^{7}$): the CVT succeeds only inside a step window below $2\times10^{-7}$, while EnSF runs unchanged.}
    \label{fig:case4}
\end{figure}

\begin{table}[!htb]
\centering
\caption{Case~4 (precise sparse observation): KL divergence of the perturbed CVT across step sizes, against EnSF with the common configuration. ``$\infty$'' denotes divergence. The stable window of the first-order iteration is extremely narrow, and the best small-budget accuracy occurs at an interior step size near the stability boundary. Tuned inside that window the variational method is exact, while EnSF requires no such tuning.}
\label{tab:case4_kl}
\small
\begin{tabular}{cccccc}
\toprule
\textbf{Method} & \textbf{Step size $\eta$} & \textbf{20 steps} & \textbf{50 steps} & \textbf{100 steps} & \textbf{1000 steps} \\
\midrule
CVT (perturbed) & $1.9\times10^{-7}$ & $1.4\times10^{6}$ & $\infty$ & $\infty$ & $\infty$ \\
CVT (perturbed) & $1.8\times10^{-7}$ & $2.9\times10^{2}$ & $15$ & $0.1$ & $0.008$ \\
CVT (perturbed) & $1.0\times10^{-7}$ & $0.008$ & $0.008$ & $0.008$ & $0.008$ \\
CVT (perturbed) & $1.0\times10^{-8}$ & $1036$ & $676$ & $225$ & $0.008$ \\
CVT (perturbed) & $1.0\times10^{-9}$ & $1937$ & $1814$ & $1627$ & $228$ \\
EnSF & --- & $0.91$ & $0.46$ & $0.36$ & $0.33$ \\
\bottomrule
\end{tabular}
\end{table}

The consequences for the first-order iteration are shown in Table~\ref{tab:case4_kl}. The stable step-size window becomes razor thin: $\eta=1.8\times10^{-7}$ converges while $\eta=1.9\times10^{-7}$ diverges, a margin of about five percent. Below the window the iteration stalls: at $\eta=10^{-9}$ the KL is still $228$ after $1{,}000$ iterations. Moreover, the best accuracy at a small fixed budget is achieved neither at the largest stable step nor at a conservative one, but at an interior value close to the stability boundary: at $\eta=10^{-7}$ the perturbed CVT reaches KL $0.008$ at every reported budget, whereas the largest stable step needs hundreds of iterations to recover from its initial transient. Selecting such a step requires per-problem tuning against a divergence cliff. The flow-based update, run unchanged with the common configuration, remains stable with KL $0.91$, $0.46$, $0.36$, and $0.33$ at $20$, $50$, $100$, and $1{,}000$ steps: less accurate than a perfectly tuned exact sampler, but obtained without any tuning in a regime where the tuned method sits a few percent away from divergence. Figure~\ref{fig:case4} confirms that both EnSF and the perturbed CVT recover the posterior structure; the EnSF mean path first moves along the observed coordinate and then spreads the information into the unobserved direction through the prior correlation.

\subsection{Conditioning along the transport path}
\label{subsec:conditioning_transport}

Figure~\ref{fig:cond} makes the posterior-geometry regularization of Proposition~\ref{prop:ensf_curvature} visible in the four Gaussian cases and summarizes the conditioning that each numerical route actually faces. The solid curve shows the condition number of the EnSF curvature $A_t^{\mathrm{EnSF}}=\Sigma_t^{-1}+h(t)H^{\top}R^{-1}H$ of Eq.~\eqref{eq:ensf_curvature_sec33}, evaluated along the experimental schedule: the reverse transport starts at the well-conditioned reference end ($\kappa\to1$ as $t\to1$) and steepens toward the posterior curvature only near the analysis end, so the flow spends most of the transport on well-conditioned intermediate geometry and never solves a fixed ill-conditioned system. Because the reverse integration terminates at a small regularized endpoint rather than exactly at $t=0$, the terminal value remains somewhat below $\kappa(A)$ in the severely ill-conditioned cases. The horizontal lines show the fixed condition numbers of the objects the optimization route works with at every iteration: the background covariance, the state-space Hessian $B^{-1}+H^{\top}R^{-1}H$, and the CVT Hessian $I+L^{\top}H^{\top}R^{-1}HL$.

Two readings are worth separating. First, in Case~1 the Hessian condition numbers exceed $\kappa(B)\approx10$ because the precise observation itself adds curvature: the conditioning of the update is a property of the full posterior geometry, not of the prior alone. Second, the CVT line shows both the strength and the limit of first-level preconditioning. In Cases~2 and~3 it reduces the conditioning from $\kappa(B)\approx1.5\times10^{8}$ to about $4\times10^{2}$ and $4.5\times10^{3}$, respectively. Indeed, in these two cases the CVT Hessian is better conditioned than most of the transport path itself: where first-level preconditioning applies, it is excellent. In Case~4, however, the precise sparse observation creates the stiff mode discussed above, and the CVT Hessian remains at $\kappa\approx10^{7}$: whenever $\lambda_{\max}(B)\max_i(R^{-1})_{ii}$ is large, first-level preconditioning is not sufficient, and the optimization route requires second-level preconditioning of the transformed observation term (Section~\ref{sec:classical_preconditioning}). The flow-based route requires no algorithmic change across this regime shift: the damped likelihood term raises the curvature it encounters (compare Cases~2 and~4), but the transport meets the data-informed geometry only near the analysis end, where the optimization route faces it at every iteration.

\begin{figure}[!ht]
    \centering
    \includegraphics[width=0.9\linewidth]{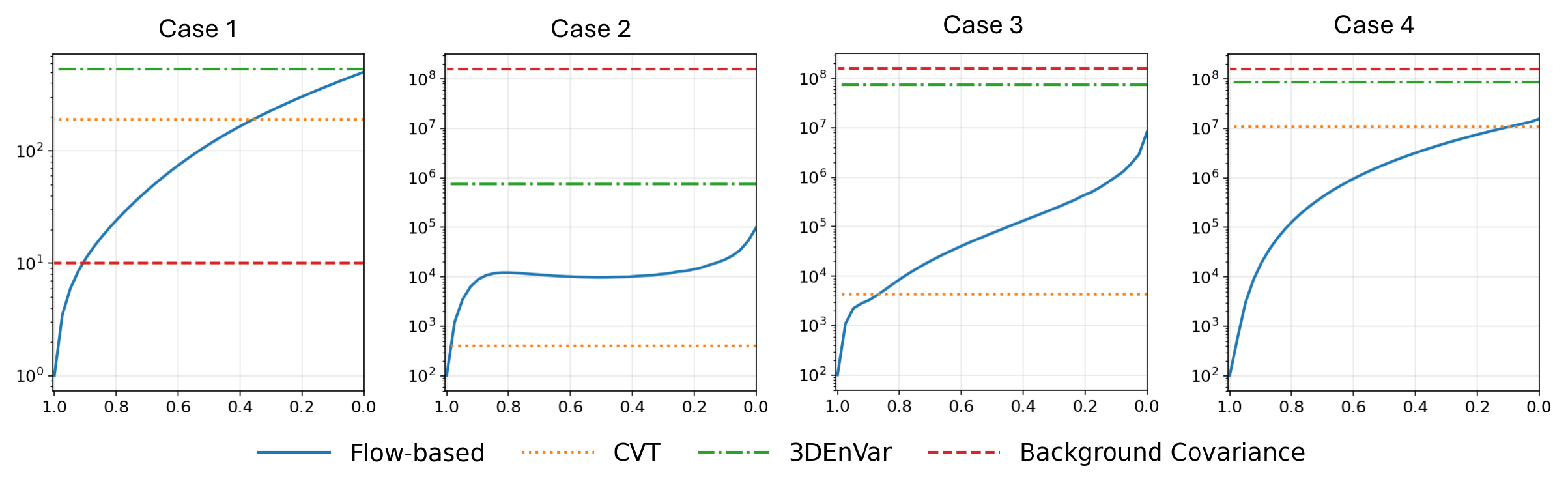}
    \caption{Condition numbers in the four Gaussian cases. Solid curve: the EnSF curvature conditioning $\kappa(A_t^{\mathrm{EnSF}})=\kappa(\Sigma_t^{-1}+h(t)H^{\top}R^{-1}H)$ (Proposition~\ref{prop:ensf_curvature}), evaluated along the experimental schedule from the reference end ($t=1$) to the analysis end ($t=0$). Horizontal lines: condition numbers of the background covariance, the 3DEnVar Hessian, and the CVT Hessian. First-level preconditioning is effective in Cases~2--3 but leaves $\kappa\approx10^{7}$ in Case~4. This is the posterior-geometry regularization mechanism: the optimization route faces its fixed conditioning at every iteration, whereas the transport starts isotropic and meets the data-informed geometry only at the analysis end.}
    \label{fig:cond}
\end{figure}

\subsection{Lorenz--96 multiscale example under sparse observations}
\label{subsec:l96_multiscale}

The nonlinear dynamical benchmark in this study is a heterogeneous Lorenz--96 system \cite{LorenzEmanuel1998}. Let \(d=10000\),
\(x(t)=(x_1(t),\ldots,x_d(t))^\top\in\mathbb{R}^d\), and impose periodic indexing.
We use the scaled construction
\begin{equation}
\frac{d x_i}{dt}=s_i (x_{i+1}-x_{i-2})x_{i-1} - x_i + F_i,\qquad i=0,\ldots,d-1,
\label{eq:heterogeneous_l96}
\end{equation}
where the coordinate-dependent scale and forcing vectors are formed by repeating the four-component blocks
\begin{equation}
(\bar s_1,\bar s_2,\bar s_3,\bar s_4)=(600,\,200,\,0.5,\,0.15),\qquad
(\bar F_1,\bar F_2,\bar F_3,\bar F_4)=(2,\,10,\,200,\,500).
\label{eq:hetero-l96}
\end{equation}
The four-component blocks in~\eqref{eq:hetero-l96} are repeated 2500 times. This construction introduces strong heterogeneity across coordinates and creates a challenging multiscale state with widely different effective nonlinear couplings and time scales. Compared with the classical Lorenz--96 model, the coordinate-dependent scaling $s_i$ and forcing $F_i$ introduce pronounced heterogeneity across state variables: the nonlinear interaction term is multiplied by coefficients $s_i$ ranging from $1.5\times10^{-1}$ to $6\times10^{2}$, introducing differences of more than three orders of magnitude in the effective strength of the nonlinear coupling. This heterogeneity is visible in the trajectory projections in Figure~\ref{fig:l96_trajectory}, where different coordinates evolve on markedly different numerical scales. We therefore use this heterogeneous Lorenz--96 setting as a nonlinear dynamical stress test for conditioning-sensitive assimilation.

In our numerical experiments we integrate \eqref{eq:hetero-l96} using a fourth-order Runge--Kutta method with a small time step ($\Delta t=10^{-5}$) to account for the stiffness induced by the large scaling coefficients. The ensemble size is $1{,}000$ and the observation model is the same as Eq.~\eqref{eq:numerical_obs}. We observe the indices $i=0,4,8,\ldots$ using zero-based indexing, corresponding to the components with scale factor $s_i=600$, so $25\%$ of the state is observed with $\sigma_{\rm obs}=1$. We compare three methods: EnSF, the perturbed CVT, and the perturbed 3DEnVar. All three construct empirical covariance matrices from their respective ensembles, and localization is applied to these ensemble covariance matrices using the same Gaspari--Cohn taper \cite{gaspari1999construction} with radius $r_{\rm loc}=4$, so that any difference in performance is not due to different localization choices. Both variational baselines use perturbed observations. The CVT step size is set by the adaptive bound \eqref{eq:cvt_adaptive_step}, which removes the per-cycle step-size tuning; the safety factor $0.9$ is still required to avoid failures. The state-space 3DEnVar uses $\eta=10^{-4}$, where a slightly larger value causes loss divergence. The variational baselines iterate until the cost change falls below $10^{-3}$ (the perturbed CVT requires about $9{,}000$ iterations per assimilation on average); EnSF uses $1{,}000$ reverse-SDE steps. The assimilation gap is $\tau_{\mathrm{DA}}=10$ model steps, i.e., $10^{-4}$ time units. Because the state amplitudes vary by orders of magnitude across coordinates, the ensemble-mean error is reported as a relative root-mean-square error (RMSE), normalized componentwise by the temporal standard deviation of the reference trajectory:
\begin{equation}
\mathrm{RMSE}_n=\left(\frac{1}{d}\sum_{i=1}^{d}\left(\frac{\bar x_{n,i}-x^{\mathrm{ref}}_{n,i}}{s^{\mathrm{ref}}_i}\right)^{2}\right)^{1/2},
\qquad s^{\mathrm{ref}}_i=\operatorname{std}_n\big(x^{\mathrm{ref}}_{n,i}\big),
\label{eq:l96_rmse}
\end{equation}
where \(\bar x_n\) is the ensemble mean and \(x_n^{\mathrm{ref}}\) is the reference state; the normalization prevents the largest-amplitude coordinates from dominating the metric.

\paragraph{Remark} In ensemble-variational DA, localization is already known to play a dual role as statistical regularization \cite{Tong2018LocalEnKF,MorzfeldHodyss2023} and as a modification of optimization geometry. Localization has also been extended to nonlinear particle filters \cite{Poterjoy2016LPF}. In flow-based methods, analogous questions remain to be analyzed carefully: how localization should enter score estimation, how it changes the transported geometry, and whether it helps or distorts the near-isotropic reference-space interpretation in strongly heterogeneous systems. Since localization is not the main focus of this paper, we do not attempt a systematic study here. Instead, for the Lorenz--96 experiment, we tested localization radii $r_{\rm loc}=1,\ldots,10$ and selected $r_{\rm loc}=4$, which gave the best performance for the variational baselines. This choice provides a favorable and fair baseline for the variational method in the comparison below.

\begin{figure}[!htb]
    \centering
    \includegraphics[width=0.5\linewidth]{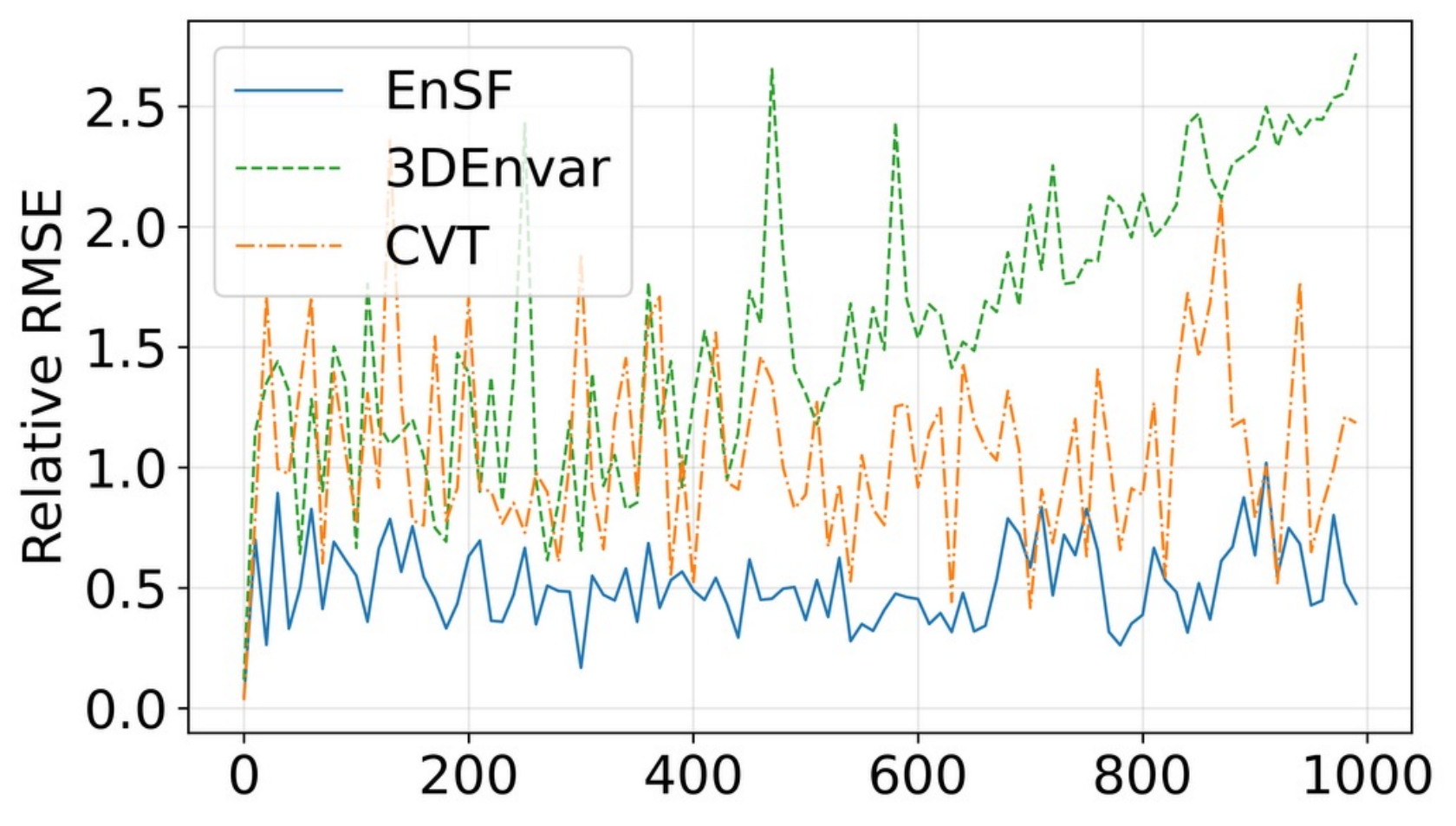}
    \caption{Lorenz--96 relative RMSE \eqref{eq:l96_rmse} at $\tau_{\mathrm{DA}}=10$ for EnSF, the perturbed CVT with the adaptive step \eqref{eq:cvt_adaptive_step}, and the perturbed 3DEnVar. All three methods share the same localized ensemble covariances and perturbed observations. EnSF remains near $0.5$ throughout; the CVT oscillates between about $1$ and $1.5$ with intermittent spikes; the state-space 3DEnVar drifts away from the truth. Under identical covariance information, the transport route is the most stable of the three in a nonlinear, strongly heterogeneous system.}
    \label{fig:l96_rmse_all}
\end{figure}

Figure~\ref{fig:l96_rmse_all} shows the relative RMSE over $1{,}000$ model steps ($100$ assimilation cycles). EnSF remains stable around $0.5$ for the whole window. The perturbed CVT tracks the truth but oscillates between about $1$ and $1.5$, with intermittent spikes above $2$. The perturbed state-space 3DEnVar drifts away, with the error growing beyond $2.5$ by the end of the window. The ranking matches the Gaussian tests: the closer the route stays to well-conditioned geometry, the more robust the result, even though all three methods share the same covariance information and the same perturbed-observation construction.

Trajectory projections are shown in Figure~\ref{fig:l96_trajectory}: dimension $0$ (observed) against dimensions $1$ and $2$ (unobserved). EnSF preserves the attractor structure; the perturbed CVT stays on the attractor with occasional departures; the perturbed 3DEnVar trajectory departs substantially, consistent with the RMSE histories.

\begin{figure}[!ht]
    \centering
    \includegraphics[width=0.8\linewidth]{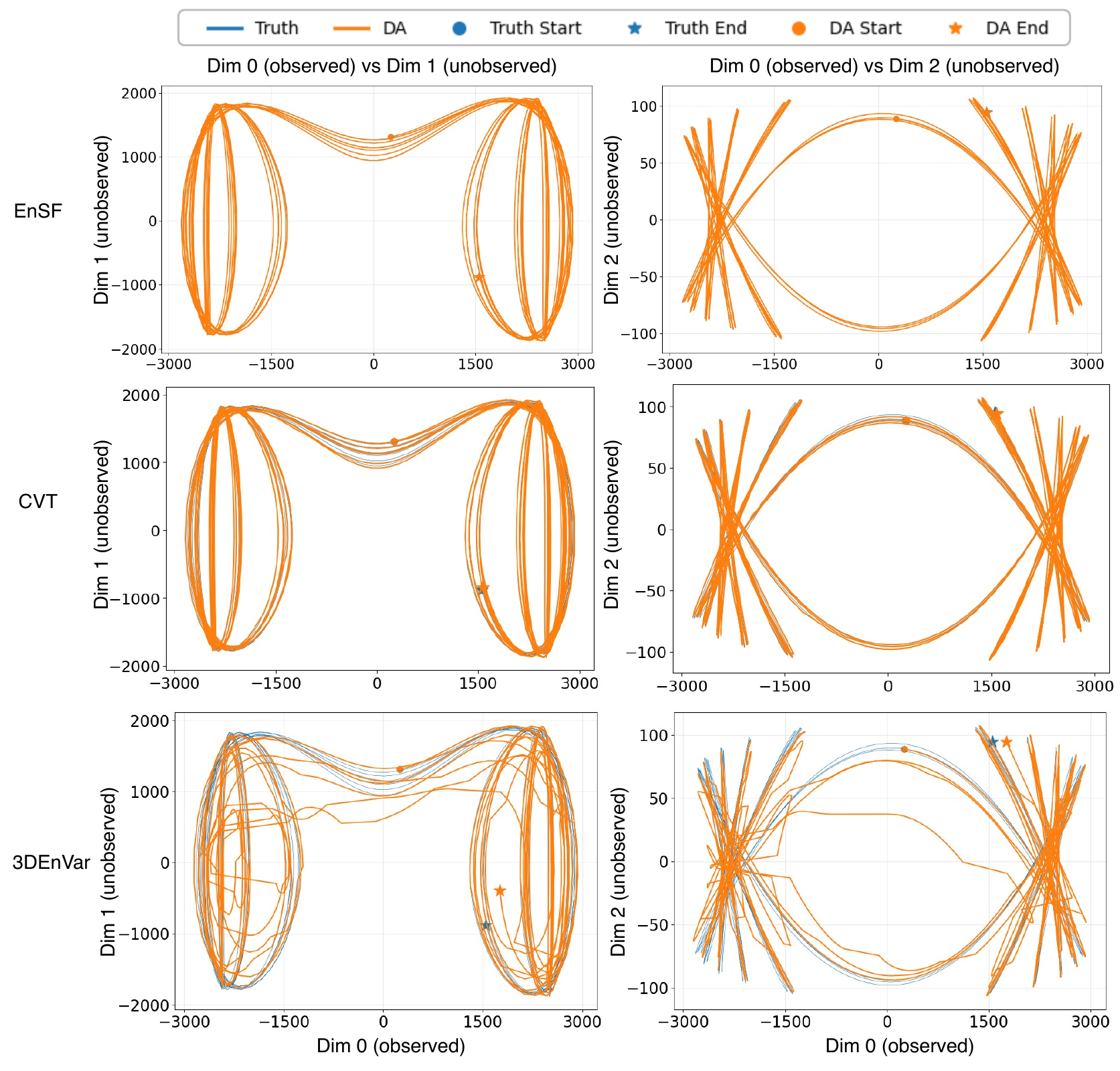}
    \caption{Lorenz--96 trajectory projections at $\tau_{\mathrm{DA}}=10$, for dimension $0$ (observed) against dimensions $1$ and $2$ (unobserved). Rows: EnSF, perturbed CVT, perturbed 3DEnVar. EnSF preserves the attractor structure, the CVT shows occasional misses, and the 3DEnVar trajectory departs substantially --- the robustness seen in the Gaussian tests persists in tracking a high-dimensional chaotic system.}
    \label{fig:l96_trajectory}
\end{figure}

The Lorenz--96 test is consistent with the main message of the Gaussian benchmarks in a nonlinear dynamical setting. The Gaussian tests provide direct posterior-distribution diagnostics through the KL divergence, whereas the Lorenz--96 test evaluates dynamical tracking through ensemble-mean RMSE and trajectory projections; under identical covariance information, the flow-based update is the most stable of the three routes.

\subsection{Computational cost}
\label{subsec:cost}

We now compare the computational cost of the two routes at $d=10^{4}$. Per iteration and per ensemble member, the dominant work of a CVT gradient step is the pair of dense products with the covariance square root, about $4d^{2}$ operations, while an EnSF reverse-SDE step requires a solve with $\alpha_t^{2}\Sigma+\beta_t^{2}I$ for the prior score \eqref{eq:sec5_diffused_score}. For a localized covariance, which is banded with periodic boundary, this solve can be performed exactly by a blocked banded Cholesky sweep with a Schur-complement correction for the periodic border; with block size $b_{\mathrm{size}}$ the cost is about $(12\,b_{\mathrm{size}}+15)\,d$ operations per member, which at $d=10^{4}$ and $b_{\mathrm{size}}=100$ is roughly $30\times$ fewer operations than a CVT iteration. In practice, however, GPU limits reduce this due to the banded sweep is bound by memory bandwidth and launch latency rather than arithmetic.

Figure~\ref{fig:l96_cost} reports measured wall-clock times at a fixed budget of $1{,}000$ iterations or reverse-SDE steps per assimilation and $N=1{,}000$ members. All timings on a single NVIDIA RTX A5000 GPU in single precision, with GPU-synchronized timers, warm-up excluded, and the model integration timed separately. The dashed lines in panel (a) show the theoretical times obtained from the operation counts; the measured times follow the theoretical slopes closely, with a roughly constant offset from launch latency and memory traffic. Panel (b) compares fixed budgets with the budgets actually needed for the results of Figure~\ref{fig:l96_trajectory}: since the perturbed CVT requires about $9{,}000$ iterations per assimilation to converge, its delivered cost is approximately $275$\,s per assimilation, against $20.5$\,s for EnSF with $1{,}000$ steps. At delivered accuracy, the flow-based route is thus roughly an order of magnitude cheaper in this experiment, while also being the more accurate method in Figure~\ref{fig:l96_rmse_all}.

\begin{figure}[!ht]
    \centering
    \includegraphics[width=0.85\linewidth]{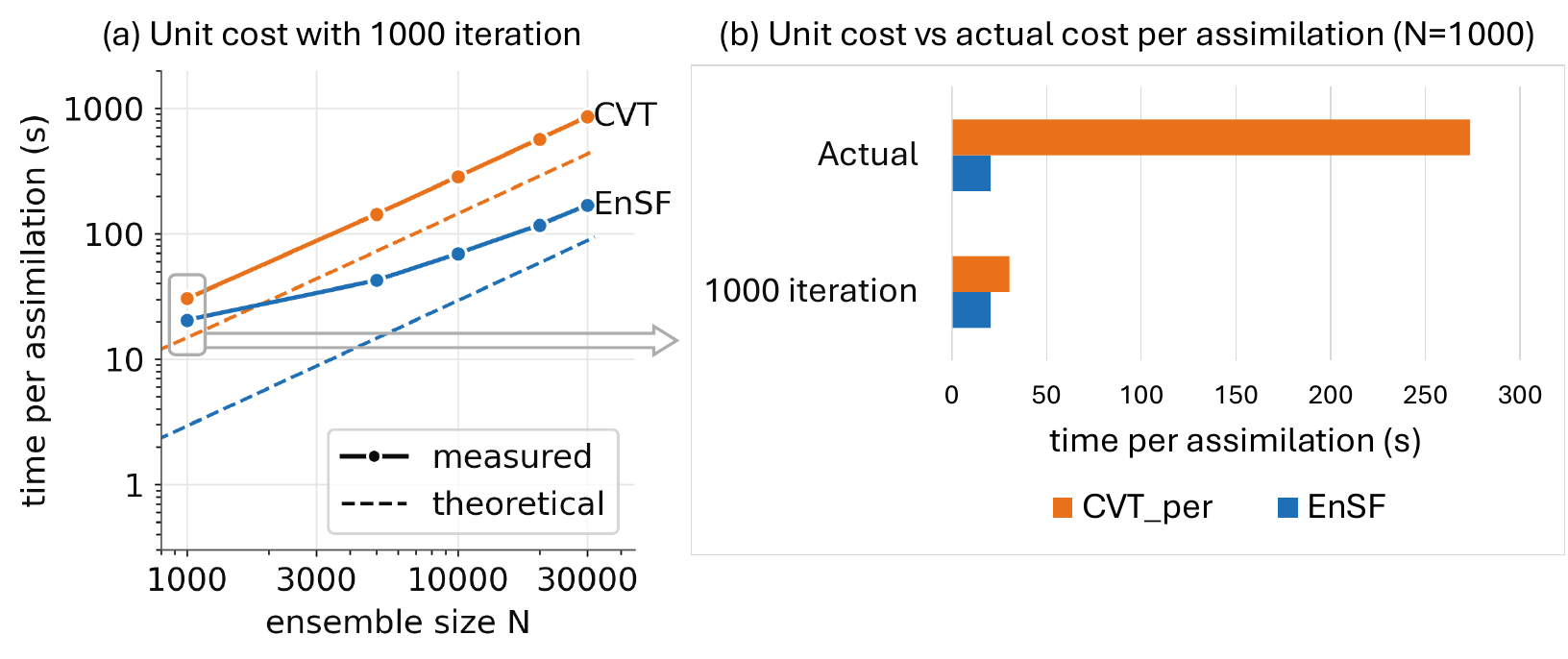}
    \caption{Computational cost at $d=10^{4}$. (a) Measured time per assimilation versus ensemble size at a fixed budget of $1{,}000$ iterations/steps (solid, markers), against the theoretical time at the GPU's nominal single-precision peak (dashed). (b) Fixed-budget versus delivered cost at $N=1{,}000$: the perturbed CVT needs about $9{,}000$ iterations per assimilation to produce the results of Figure~\ref{fig:l96_trajectory} ($\approx275$\,s), while EnSF delivers its result with $1{,}000$ steps ($20.5$\,s). At delivered accuracy the flow-based route is roughly an order of magnitude cheaper.}
    \label{fig:l96_cost}
\end{figure}

\subsection{Discussion}
\label{sec:discussion}

The experiments support a consistent reading. First, the Gaussian tests do not show that variational methods fail statistically. On the contrary, the perturbed-observation baselines are exact posterior samplers in this setting, and when their step size is tuned and the inner problem is iterated to convergence they are the most accurate methods in the comparison: the perturbed CVT reaches the sampling floor of the KL estimator in Cases~2--4. What the tests isolate is the numerical behavior of the optimization route. The state-space iteration is impractically slow (Case~2) or fails to propagate information into unobserved directions at realistic budgets (Case~3); the unperturbed variants systematically collapse the analysis spread (all cases); and the stable step-size window narrows to a few percent around a divergence cliff exactly when the observation is precise relative to the prior spread (Case~4). Each of these failure modes has an established remedy within the variational framework, i.e., perturbed observations, the control-variable transform, and second-level preconditioning, but the remedies are regime-dependent choices that must be diagnosed and re-tuned, whereas the flow-based update ran in every regime with one common configuration.

Second, the comparison is about routes, not solvers. Gradient descent, conjugate-gradient methods, the CVT, and Lanczos-based second-level preconditioners are different ways of solving an equivalent fixed local optimization problem, and a well-preconditioned Krylov method would solve the quadratic inner problems of the Gaussian cases quickly (Section~\ref{sec:classical_preconditioning}); we make no claim that the flow-based update is faster or more accurate than such a solver on a given quadratic. The structural difference is that the flow-based update never solves a fixed ill-conditioned system at all: by Propositions~\ref{prop:posterior_curvature} and~\ref{prop:ensf_curvature} and Figure~\ref{fig:cond}, it moves the ensemble through a family of progressively regularized intermediate distributions whose conditioning is controlled by the diffusion schedule and the damping function rather than fixed by $H$ and $R$. Conditioning enters the two routes at different levels, and this is the precise sense in which the transport realizes posterior-geometry regularization.

Third, the flow-based update is neither exact nor free of choices. Its guided score is an approximation away from $t=0$, and the Gaussian tests quantify the price: EnSF plateaus at KL values between $0.07$ and $0.33$ in the ill-conditioned cases, above the floor that the converged exact samplers reach. Its configuration including noise schedule, damping function, endpoint was fixed once and reused across all experiments. The reverse-step budget and ensemble size were set per experiment which shows the empirical robustness of the flow-based method.

Fourth, the Lorenz--96 benchmark shows that the same picture survives contact with a nonlinear, strongly heterogeneous system at $d=10^{4}$, where the exact posterior is unavailable and all methods share localized ensemble covariances and perturbed observations. There the flow-based update is simultaneously the most stable (Figure~\ref{fig:l96_rmse_all}), the best at preserving the attractor geometry (Figure~\ref{fig:l96_trajectory}), and roughly an order of magnitude cheaper at delivered accuracy (Figure~\ref{fig:l96_cost}), because the banded structure of the localized covariance can be exploited exactly inside the reverse-SDE solve while the CVT iteration is dominated by dense covariance products.

There are clear limits to the interpretation advanced here. The strongest mathematical statements are available in Gaussian or near-Gaussian settings, where the diffused covariance $\Sigma_t=\alpha_t^2 B_{n+1\mid n}+\beta_t^2 I$ makes the regularization mechanism explicit; outside that regime, the argument becomes local or heuristic, and the precise gap between the guided score and the exact posterior score remains an open issue. The cost comparison is implementation- and hardware-specific, and second-level-preconditioned Krylov solvers were not benchmarked. Overall, variational DA and flow-based transport should be viewed as distinct computational routes toward the same Bayesian target, and the experiments indicate that the transport route matters most when posterior geometry is dominated by anisotropy, weak or extremely precise observations, and tight iteration budgets.

\section{Conclusion}
\label{sec:conclusion}
This paper studied how variational optimization and flow-based transport realize the same Bayesian update through different computational geometries. Incremental variational DA repeatedly solves a fixed local optimization problem whose Hessian characterizes the local posterior geometry.
Flow-based DA follows a different route. Rather than repeatedly operating on the fixed endpoint curvature, it transports samples through a pseudo-time sequence of intermediate geometries connecting an isotropic Gaussian reference to the target posterior. A natural next step is to study how flow-based transport can be integrated into existing operational variational and hybrid assimilation systems as a conditioning layer, rather than as a replacement for the full variational framework. In this direction, one possible use is to apply the flow-based update inside selected outer loops or difficult assimilation windows, where the local incremental problem is strongly affected by covariance anisotropy, nonlinear observation operators, or a weak observational constraint. This would allow the method to complement existing control-variable transforms, localization, and hybrid covariance models while preserving the surrounding operational infrastructure. 
Developing such hybrid operational interfaces, together with scalable localization, adaptive diffusion schedules, efficient parallel implementations, and systematic comparisons against second-level-preconditioned Krylov solvers and control-space implementations of the transport, is an important direction for future research.

\section*{Acknowledgments}
This paper was supported by the U.S. Department of Energy (DOE), Office of Science, Office of Advanced Scientific Computing Research (ASCR), Applied Mathematics Program under contracts ERKJ443 and ERKJ388.
Feng Bao acknowledges support from the U.S. National Science Foundation (NSF) under grant DMS-2142672 and from the DOE Office of Science, ASCR Applied Mathematics Program under grant DE-SC0025412. Hristo G.~Chipilski acknowledges support from Florida State University’s CRC Seed Grant (047080). Peter Jan van Leeuwen acknowledges support from the National Oceanic and Atmospheric Administration (NOAA) project CADRE (NA24OARX459C0002-T1-01) and the Department of Defense (DOD) project RAM-HORNS (N00014-24-2017).

\bibliographystyle{siamplain}
\bibliography{mybib}

@article{Carrassi2018,
  author  = {Carrassi, Alberto and Bocquet, Marc and Bertino, Laurent and Evensen, Geir},
  title   = {Data Assimilation in the Geosciences: An Overview of Methods, Issues, and Perspectives},
  journal = {WIREs Climate Change},
  year    = {2018},
  volume  = {9},
  number  = {5},
  pages   = {e535}
}

@article{gaspari1999construction,
  title={Construction of correlation functions in two and three dimensions},
  author={Gaspari, Gregory and Cohn, Stephen E},
  journal={Quarterly Journal of the Royal Meteorological Society},
  volume={125},
  number={554},
  pages={723--757},
  year={1999},
  publisher={Wiley Online Library}
}

@article{ParrishDerber1992,
  author  = {Parrish, David F. and Derber, John C.},
  title   = {The National Meteorological Center's Spectral Statistical-Interpolation Analysis System},
  journal = {Monthly Weather Review},
  year    = {1992},
  volume  = {120},
  number  = {8},
  pages   = {1747--1763}
}

@article{Barker2004,
  author  = {Barker, Dale M. and Huang, Wei and Guo, Yong-Ran and Bourgeois, Amy J. and Xiao, Qingnong},
  title   = {A Three-Dimensional Variational Data Assimilation System for MM5: Implementation and Initial Results},
  journal = {Monthly Weather Review},
  year    = {2004},
  volume  = {132},
  number  = {4},
  pages   = {897--914}
}

@article{Rabier2000,
  author  = {Rabier, Florence and J{\"a}rvinen, H. and Klinker, E. and Mahfouf, Jean-Fran{\c c}ois and Simmons, Adrian},
  title   = {The {ECMWF} Operational Implementation of Four-Dimensional Variational Assimilation. {I}: Experimental Results with Simplified Physics},
  journal = {Quarterly Journal of the Royal Meteorological Society},
  year    = {2000},
  volume  = {126},
  number  = {564},
  pages   = {1143--1170}
}

@article{Huang2009,
  author  = {Huang, Xiang-Yu and Xiao, Qingnong and Barker, Dale M. and Zhang, Xin and Michalakes, John and Huang, Wei and Henderson, Thomas and Bray, John and Chen, Yongsheng and Ma, Zhaoxia and Dudhia, Jimy and Guo, Yong-Ran and Zhang, Xuhui and Won, Dong-Joo and Lin, Han-Ching and Kuo, Ying-Hwa},
  title   = {Four-Dimensional Variational Data Assimilation for {WRF}: Formulation and Preliminary Results},
  journal = {Monthly Weather Review},
  year    = {2009},
  volume  = {137},
  number  = {1},
  pages   = {299--314}
}

@article{Tremolet2007,
  author  = {Tr{\'e}molet, Yannick},
  title   = {Incremental 4D-Var Convergence Study},
  journal = {Tellus A: Dynamic Meteorology and Oceanography},
  year    = {2007},
  volume  = {59},
  number  = {5},
  pages   = {706--718}
}

@article{BaoEtAl2025,
  author  = {Bao, Feng and Chipilski, Hristo G. and Liang, Siming and Zhang, Guannan and Whitaker, Jeffrey S.},
  title   = {Nonlinear Ensemble Filtering with Diffusion Models: Application to the Surface Quasi-Geostrophic Dynamics},
  journal = {Monthly Weather Review},
  year    = {2025},
  volume  = {153},
  number  = {7},
  pages   = {1155--1169}
}

@article{ZhangBaoZhang2025IEnSF,
  author        = {Zhang, Zezhong and Bao, Feng and Zhang, Guannan},
  title         = {{IEnSF}: Iterative Ensemble Score Filter for Reducing Error in Posterior Score Estimation in Nonlinear Data Assimilation},
  journal       = {Journal of Computational Physics},
  year          = {2026},
  volume={568},
  pages={115357}
}

@article{Bao2024EnSF,
  author  = {Bao, Feng and Zhang, Zezhong and Zhang, Guannan},
  title   = {An Ensemble Score Filter for Tracking High-Dimensional Nonlinear Dynamical Systems},
  journal = {Computer Methods in Applied Mechanics and Engineering},
  volume  = {432},
  pages   = {117447},
  year    = {2024}
}

@article{CourtierEtAl1998,
  author  = {Courtier, Philippe and Andersson, Erik and Heckley, William and others},
  title   = {The ECMWF Implementation of Three-Dimensional Variational Assimilation (3D-Var). I: Formulation},
  journal = {Quarterly Journal of the Royal Meteorological Society},
  volume  = {124},
  number  = {550},
  pages   = {1783--1807},
  year    = {1998}
}

@article{DerberBouttier1999,
  author  = {Derber, John and Bouttier, Fran{\c{c}}ois},
  title   = {A Reformulation of the Background Error Covariance in the ECMWF Global Data Assimilation System},
  journal = {Tellus A},
  volume  = {51},
  number  = {2},
  pages   = {195--221},
  year    = {1999}
}

@article{FisherEtAl2009,
  author  = {Fisher, Mike and Nocedal, Jorge and Tr{\'e}molet, Yannick and Wright, Stephen J.},
  title   = {Data Assimilation in Weather Forecasting: A Case Study in PDE-Constrained Optimization},
  journal = {Optimization and Engineering},
  volume  = {10},
  number  = {3},
  pages   = {409--426},
  year    = {2009}
}

@article{GrattonTshimanga2009,
  author  = {Gratton, Serge and Tshimanga, Jean},
  title   = {An Observation-Space Formulation of Variational Assimilation Using a Restricted Preconditioned Conjugate-Gradient Algorithm},
  journal = {Quarterly Journal of the Royal Meteorological Society},
  volume  = {135},
  pages   = {1573--1585},
  year    = {2009}
}

@article{WangEtAl2013,
  author  = {Wang, Xuguang and Parrish, David F. and Kleist, Daryl T. and Whitaker, Jeffrey S.},
  title   = {GSI 3DVar-Based Ensemble-Variational Hybrid Data Assimilation for NCEP Global Forecast System: Single-Resolution Experiments},
  journal = {Monthly Weather Review},
  volume  = {141},
  number  = {11},
  pages   = {4098--4117},
  year    = {2013}
}

@article{LorenzEmanuel1998,
  author  = {Lorenz, Edward N. and Emanuel, Kerry A.},
  title   = {Optimal Sites for Supplementary Weather Observations: Simulation with a Small Model},
  journal = {Journal of the Atmospheric Sciences},
  volume  = {55},
  number  = {3},
  pages   = {399--414},
  year    = {1998}
}

@article{hamill2000hybrid,
  author  = {Hamill, Thomas M. and Snyder, Chris},
  title   = {A Hybrid Ensemble {Kalman} Filter--3D Variational Analysis Scheme},
  journal = {Monthly Weather Review},
  year    = {2000},
  volume  = {128},
  number  = {8},
  pages   = {2905--2919}
}

@article{lorenc2003potential,
  author  = {Lorenc, Andrew C.},
  title   = {The Potential of the Ensemble {Kalman} Filter for {NWP}---A Comparison with 4D-Var},
  journal = {Quarterly Journal of the Royal Meteorological Society},
  year    = {2003},
  volume  = {129},
  number  = {595},
  pages   = {3183--3203}
}

@article{buehner2005ensemble,
  author  = {Buehner, Mark},
  title   = {Ensemble-Derived Stationary and Flow-Dependent Background-Error Covariances:
             Evaluation in a Quasi-Operational {NWP} Setting},
  journal = {Quarterly Journal of the Royal Meteorological Society},
  year    = {2005},
  volume  = {131},
  number  = {607},
  pages   = {1013--1043}
}

@article{bannister2017,
  author  = {Bannister, Ross N.},
  title   = {A Review of Operational Methods of Variational and Ensemble-Variational Data Assimilation},
  journal = {Quarterly Journal of the Royal Meteorological Society},
  year    = {2017},
  volume  = {143},
  number  = {703},
  pages   = {607--633}
}

@article{xiong2025sensitivity,
    title={Robustness of the Ensemble Score Filter to the Type of Assimilated Observation Networks},
  author={Xiong, Zixiang and Liang, Siming and Bao, Feng and Zhang, Guannan and Chipilski, Hristo G},
  journal={Atmospheric Science Letters},
  volume={27},
  number={1},
  pages={e70004},
  year={2026},
  publisher={Wiley Online Library}
}

@article{liang_ensf_inpainting_2025,
  title={Ensemble score filter with image inpainting for data assimilation in tracking surface quasi-geostrophic dynamics with partial observations},
  author={Liang, Siming and Tran, Hoang and Bao, Feng and Chipilski, Hristo G and van Leeuwen, Peter Jan and Zhang, Guannan},
  journal={arXiv preprint arXiv:2501.12419},
  year={2025}
}

@article{VanLeeuwen1996,
author = "{van Leeuwen}, Peter Jan and Geir  Evensen",
title = "Data Assimilation and Inverse Methods in Terms of a Probabilistic Formulation",
journal = "Monthly Weather Review",
year = "1996",
publisher = "American Meteorological Society",
address = "Boston MA, USA",
volume = "124",
number = "12",
pages=      "2898-2913"
}

@article{GrattonLawlessNichols2007,
  author  = {Gratton, Serge and Lawless, Amos S. and Nichols, Nancy K.},
  title   = {Approximate {G}auss--{N}ewton Methods for Nonlinear Least Squares Problems},
  journal = {SIAM Journal on Optimization},
  volume  = {18},
  number  = {1},
  pages   = {106--132},
  year    = {2007}
}

@article{TshimangaEtAl2008,
  author  = {Tshimanga, Jean and Gratton, Serge and Weaver, Anthony T. and Sartenaer, Annick},
  title   = {Limited-Memory Preconditioners, with Application to Incremental Four-Dimensional Variational Data Assimilation},
  journal = {Quarterly Journal of the Royal Meteorological Society},
  volume  = {134},
  number  = {632},
  pages   = {751--769},
  year    = {2008}
}

@article{HabenLawlessNichols2011,
  author  = {Haben, Stephen A. and Lawless, Amos S. and Nichols, Nancy K.},
  title   = {Conditioning of Incremental Variational Data Assimilation, with Application to the {M}et {O}ffice System},
  journal = {Tellus A},
  volume  = {63},
  number  = {4},
  pages   = {782--792},
  year    = {2011}
}

@article{Burgers1998,
  author  = {Burgers, Gerrit and van Leeuwen, Peter Jan and Evensen, Geir},
  title   = {Analysis Scheme in the Ensemble {K}alman Filter},
  journal = {Monthly Weather Review},
  volume  = {126},
  number  = {6},
  pages   = {1719--1724},
  year    = {1998}
}

@article{Bardsley2014RTO,
  author  = {Bardsley, Johnathan M. and Solonen, Antti and Haario, Heikki and Laine, Marko},
  title   = {Randomize-Then-Optimize: A Method for Sampling from Posterior Distributions in Nonlinear Inverse Problems},
  journal = {SIAM Journal on Scientific Computing},
  volume  = {36},
  number  = {4},
  pages   = {A1895--A1910},
  year    = {2014}
}

@article{GilbertLemarechal1989,
  author  = {Gilbert, Jean Charles and Lemar{\'e}chal, Claude},
  title   = {Some Numerical Experiments with Variable-Storage Quasi-{N}ewton Algorithms},
  journal = {Mathematical Programming},
  volume  = {45},
  pages   = {407--435},
  year    = {1989}
}

@article{GurolEtAl2014,
  author  = {G{\"u}rol, Selime and Weaver, Anthony T. and Moore, Andrew M. and Piacentini, Andrea and Arango, Hernan G. and Gratton, Serge},
  title   = {{B}-Preconditioned Minimization Algorithms for Variational Data Assimilation with the Dual Formulation},
  journal = {Quarterly Journal of the Royal Meteorological Society},
  volume  = {140},
  number  = {679},
  pages   = {539--556},
  year    = {2014}
}

@article{Spantini2022Coupling,
  author  = {Spantini, Alessio and Baptista, Ricardo and Marzouk, Youssef},
  title   = {Coupling Techniques for Nonlinear Ensemble Filtering},
  journal = {SIAM Review},
  volume  = {64},
  number  = {4},
  pages   = {921--953},
  year    = {2022}
}

@article{MorzfeldHodyss2023,
  author  = {Morzfeld, Matthias and Hodyss, Daniel},
  title   = {A Theory for Why Even Simple Covariance Localization Is So Useful in Ensemble Data Assimilation},
  journal = {Monthly Weather Review},
  volume  = {151},
  number  = {3},
  pages   = {717--736},
  year    = {2023}
}

@article{Tong2018LocalEnKF,
  author  = {Tong, Xin T.},
  title   = {Performance Analysis of Local Ensemble {K}alman Filter},
  journal = {Journal of Nonlinear Science},
  volume  = {28},
  number  = {4},
  pages   = {1397--1442},
  year    = {2018}
}

@article{LiGelbLee2024,
  author  = {Li, Tongtong and Gelb, Anne and Lee, Yoonsang},
  title   = {A Structurally Informed Data Assimilation Approach for Nonlinear Partial Differential Equations},
  journal = {Journal of Computational Physics},
  volume  = {519},
  pages   = {113396},
  year    = {2024}
}

@book{NocedalWright2006,
  author    = {Nocedal, Jorge and Wright, Stephen J.},
  title     = {Numerical Optimization},
  edition   = {2nd},
  series    = {Springer Series in Operations Research and Financial Engineering},
  publisher = {Springer},
  address   = {New York},
  year      = {2006}
}

@inproceedings{ColeLu2024,
  author    = {Cole, Frank and Lu, Yulong},
  title     = {Score-Based Generative Models Break the Curse of Dimensionality in Learning a Family of Sub-{G}aussian Distributions},
  booktitle = {International Conference on Learning Representations},
  year      = {2024}
}

@article{Poterjoy2016LPF,
  author  = {Poterjoy, Jonathan},
  title   = {A Localized Particle Filter for High-Dimensional Nonlinear Systems},
  journal = {Monthly Weather Review},
  volume  = {144},
  number  = {1},
  pages   = {59--76},
  year    = {2016}
}

@article{Transue2026FlowMatchingDA,
  author  = {Transue, Taos and Chen, Bohan and Takao, So and Wang, Bao},
  title   = {Flow Matching for Efficient and Scalable Data Assimilation},
  journal = {SIAM/ASA Journal on Uncertainty Quantification},
  year    = {2026},
  note    = {To appear; arXiv:2508.13313}
}

@inproceedings{si2025latent,
  title={Latent-EnSF: A latent ensemble score filter for high-dimensional data assimilation with sparse observation data},
  author={Si, Phillip and Chen, Peng},
  booktitle={International Conference on Learning Representations},
  volume={2025},
  pages={94879--94895},
  year={2025}
}

@inproceedings{xiao2026ld,
  title={LD-EnSF: synergizing latent dynamics with ensemble score filters for fast data assimilation with sparse observations},
  author={Xiao, Pengpeng and Si, Phillip and Chen, Peng},
  booktitle={International Conference on Learning Representations},
  volume={2026},
  pages={152461--152491},
  year={2026}
}

@misc{BinderDasguptaOberai2026,
  author        = {Binder, Brianna and Dasgupta, Agnimitra and Oberai, Assad},
  title         = {Closed-Form Conditional Diffusion Models for Data Assimilation},
  year          = {2026},
  howpublished  = {arXiv:2603.21291}
}

\end{document}